%%%%%%%%%%%%%%%%%%%%%%% file template.tex %%%%%%%%%%%%%%%%%%%%%%%%%
%
% This is a general template file for the LaTeX package SVJour3
% for Springer journals.          Springer Heidelberg 2010/09/16
%
% Copy it to a new file with a new name and use it as the basis
% for your article. Delete % signs as needed.
%
% This template includes a few options for different layouts and
% content for various journals. Please consult a previous issue of
% your journal as needed.
%
%%%%%%%%%%%%%%%%%%%%%%%%%%%%%%%%%%%%%%%%%%%%%%%%%%%%%%%%%%%%%%%%%%%
%
% First comes an example EPS file -- just ignore it and
% proceed on the \documentclass line
% your LaTeX will extract the file if required
% [arxiv_v2: filecontents example.eps stripped, 188 chars]
\RequirePackage{fix-cm}
\documentclass[smallcondensed]{svjour3}     % onecolumn (ditto)
\smartqed  % flush right qed marks, e.g. at end of proof
\usepackage{graphicx}
\usepackage{amsmath}%
\usepackage{amsfonts}%
\usepackage{amssymb}%
\usepackage{mathrsfs}
\usepackage[matrix,arrow,curve]{xy}
\usepackage{pgf,tikz}
\usepackage{mathrsfs}
\usetikzlibrary{arrows}
\usetikzlibrary[patterns]
%
% \usepackage{mathptmx}      % use Times fonts if available on your TeX system
%
% insert here the call for the packages your document requires
%\usepackage{latexsym}
% etc.
%
% please place your own definitions here and don't use \def but
% \newcommand{}{}
%
% Insert the name of "your journal" with
% \journalname{myjournal}
%
%-------------------------------------------

\begin{document}

\title{On a topology and limits for inductive systems of $C^*$-algebras over partially ordered sets%\thanks{Grants or other notes
%about the article that should go on the front page should be
%placed here. General acknowledgments should be placed at the end of the article.}
}
\subtitle{}

\titlerunning{On a topology and limits for inductive systems of $C^*$-algebras}        % if too long for running head

\author{Gumerov R.N., Lipacheva E.V., Grigoryan T.A.}

%\authorrunning{Short form of author list} % if too long for running head

\institute{R. Gumerov \at
              Chair of Mathematical Analysis,  N.I. Lobachevsky Institute of Mathematics and Mechanics,  Kazan (Volga Region) Federal University, Kremlevskaya 35, Kazan, Russian Federation, 420008 \\
              \email{Renat.Gumerov@kpfu.ru}           %  \\
%             \emph{Present address:} of F. Author  %  if needed
           \and
           E. Lipacheva \at
              Chair of Higher Mathematics, Kazan State Power Engineering University, Krasnoselskaya 51,  Kazan, Russian Federation, 420066\\
              Tel.: +7(843)519-42-84\\
              \email{elipacheva@gmail.com}
              \and
           T. Grigoryan \at
              Chair of Higher Mathematics, Kazan State Power Engineering University, Krasnoselskaya 51,  Kazan, Russian Federation, 420066\\
              \email{tkhorkova@gmail.com}
}

\date{Received: date / Accepted: date}
% The correct dates will be entered by the editor

\maketitle

\begin{abstract}
Motivated by algebraic quantum field theory and our previous work we study properties of inductive systems of \ $C^*$-algebras over arbitrary partially ordered sets. A partially ordered set can be
represented as the union of  the  family of its maximal upward directed subsets indexed by elements of a certain set. We
consider a topology on the set of indices generated by a base of neighbourhoods. Examples of those topologies with different properties are given. An inductive system of $C^*$-algebras and its inductive limit arise naturally over each maximal upward directed subset. Using those inductive limits, we construct different types of $C^*$-algebras. In particular, for  neighbourhoods of the  topology on the set of indices we deal with the $C^*$-algebras which are the direct products of those inductive limits.
The present paper is concerned with  the above-mentioned topology and the algebras arising from an inductive system of $C^*$-algebras over a partially ordered set. We show that there exists a connection between properties of that topology and those $C^*$-algebras.

\keywords{$C^*$-algebra \and Inductive limit \and Inductive system \and Partially ordered set \and Topology}
% \PACS{PACS code1 \and PACS code2 \and more}
% \subclass{MSC code1 \and MSC code2 \and more}
\end{abstract}

\section{Introduction}
\label{intro}
The motivation for the present paper comes from
algebraic quantum field theory \cite{HaagKastler1964}--\cite{Vasselli2012} %\cite{Haag1996},\cite{Araki2009},
%\cite{Roberts2000}, \cite{BrunettiFred2006}, \cite{BDFY2015}, \cite{Ruzzi2005}, \cite{RuzziVasselli2012},\cite{Vasselli2012}
and our previous work on inductive systems of $C^*$-algebras  \cite{GLS2016}--\cite{GLS2018}.

The general framework of algebraic quantum field theory is given
by a covariant functor. Usually that functor acts from a category
whose objects are topological spaces with additional structures and its
morphisms are structure preserving embeddings into a category  describing the
algebraic structure of observables.  The standard assumption in quantum physics
is that the second category consists of unital $C^*$-algebras and
unital embeddings of $C^*$-algebras.
The basic tool of the algebraic approach to quantum fields over a spacetime is  a net of $C^*$-algebras defined over a partially ordered set defined as a
suitable set of regions of the spacetime ordered under inclusion \cite{HaagKastler1964}--\cite{Araki2009}.

In the papers \cite{Ruzzi2005}--\cite{Vasselli2012} the authors consider nets containing $C^*$-algebras of quantum observables for the case of curved spacetimes.  The net that is constructed by means of semigroup $C^*$-algebra generated by the path semigroup for  partially ordered set is studied in \cite{GLS2016}. The paper \cite{gumerov2018} contains results on limit
automorphisms  for inductive sequences of Toeplitz algebras which are closely related to the facts on the mappings of topological groups \cite{Gu05}, \cite{Gu05s}. In \cite{GLS2018} the authors deal with a net of $C^*$-algebras associated to a net over a partially ordered set consisting of Hilbert spaces.

In this paper we consider a covariant functor from a category associated with an arbitrary partially ordered set $K$ into the category of unital $C^*$-algebras and their unital $*$-homomorphisms.
That functor is also called an inductive system over $K$.
Using Zorn's lemma, the set $K$ can be represented as the union
of  the  family $\{K_i \}$ of its maximal upward directed subsets indexed by elements of a set $I$. We
consider a topology on the set $I$ generated by a base of neighbourhoods.  For every set $K_i$, $i\in I$, the original inductive system over $K$ yields naturally the inductive system of $C^*$-algebras over $K_i$ and its inductive limit. Using those inductive limits, we construct different types of $C^*$-algebras. In particular, for  neighbourhoods of the  topology on the set of indices we deal with the $C^*$-algebras which are the direct products of  limits for inductive systems over the sets $K_i$.

The present paper is devoted to the study of properties of  the above-mentioned topology and the $C^*$-algebras. We show that there exists a connection between topological and algebraic structures.

The paper consists of Introduction, three sections and Appendix.  The first section contains preliminaries.
In the second section we consider a  topology on the index set $I$ and study its properties. Examples of those topologies with different properties are given.
The third section deals with inductive limits. In this section a connection between topological and algebraic constructions is studied. Finally, Appendix contains the figures for the examples in the second section.

A part of the results in this paper was announced without proofs in \cite{GLG2018}.

\section{Preliminaries}\label{preliminaries}
\label{sec:1}

In what follows, we shall consider an arbitrary partially ordered set \ $(\,K, \, \leq \,)$ that is not necessarily directed.
The category associated to this set is denoted by the same letter \ $K$. We recall that the objects of this category are the elements of the set \ $K$, and, for any pair \ $a,b \in K$, the set of morphisms from \ $a$ \ to \ $b$ \ consists of the single element \ $(a,b)$ \ provided that \ $a\leq b$, and is the void set otherwise.

Further, we consider a covariant functor \ $\mathcal{F}$ \ from the category \ $K$ \ into the category of unital \ $C^*$-algebras and their unital \ $*$-homomorphisms. As was mentioned in Introduction, such a functor is called \emph{an inductive system }
in the category of \ $C^*$-algebras over the set \ $(K, \, \leq~)$. It may be given by a collection \ $(K,\{\mathfrak{A}_a\},\{\sigma_{ba}\})$ \ satisfying the properties from the definition of a functor.  We shall write \ $\mathcal{F}=(K,\{\mathfrak{A}_a\},\{\sigma_{ba}\})$.
 Here, $\{\mathfrak{A}_a\mid a\in K\}$ is a family of unital $C^*$-algebras. We also suppose that all morphisms $\sigma_{ba}:\mathfrak{A}_a\longrightarrow \mathfrak{A}_b$, where $a\leq b$, are embeddings of
$C^*$-algebras, i.\,e., unital injective $*$-homomorphisms. Recall that the equations
$\sigma_{ca}=\sigma_{cb}\circ\sigma_{ba}$ hold for all elements $a,b,c \in K$ satisfying the condition $a\leq b\leq
c$. Furthermore, for each element $a\in K$ the morphism $\sigma_{aa}$ is the identity mapping.

Considering the family of all upward directed subsets of the set \ $(\,K, \, \leq \,)$ and making use of \,  Zorn's lemma, one can easily prove

\begin{proposition}\label{Kdecomposition}
Let \ $(\,K, \, \leq \,)$ \ be a partially ordered set. Then the following equality holds:
\begin{equation}\label{KbigcupKi}
K=\bigcup\limits_{i\in I}K_i,
\end{equation}
where \ $\left \{ \, K_i \, | \, i\in I \, \right \}$ \ is the family of all maximal upward directed subsets of  \, $(K, \, \leq )$.

Moreover, for every $i\in I$ and $a\in K_i$ the set $\{b\in K \mid b\leq a\}$ is a subset of $ K_i$.

\end{proposition}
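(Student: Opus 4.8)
The plan is to prove the covering $K=\bigcup_{i\in I}K_i$ by a direct application of Zorn's lemma, and then to derive the ``moreover'' clause from the maximality of each $K_i$. Since every $K_i$ is a subset of $K$, the inclusion $\bigcup_{i\in I}K_i\subseteq K$ is immediate, so the real content of the first assertion is the reverse inclusion: each $a\in K$ lies in \emph{some} maximal upward directed subset. To show this I would fix $a\in K$ and consider the collection $\mathcal{U}_a$ of all upward directed subsets of $K$ that contain $a$, partially ordered by inclusion. This collection is nonempty, since the singleton $\{a\}$ is upward directed by reflexivity. To invoke Zorn's lemma, I must verify that every chain in $\mathcal{U}_a$ admits an upper bound, for which the natural candidate is the union of the chain. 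The key verification is that the union $D=\bigcup_{\lambda}D_\lambda$ of a chain of upward directed sets is again upward directed: given $x,y\in D$, they belong to members $D_{\lambda_1}$ and $D_{\lambda_2}$ of the chain, and because the chain is totally ordered one of these sets contains the other, so $x$ and $y$ both lie in a single $D_\lambda$, whose directedness supplies a common upper bound lying in $D$. Zorn's lemma then produces a maximal element $M$ of $\mathcal{U}_a$.

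The one point requiring care is to confirm that $M$ is maximal not merely within $\mathcal{U}_a$ but among \emph{all} upward directed subsets of $K$. This is immediate: if $M\subseteq N$ with $N$ upward directed, then $a\in M\subseteq N$, so $N\in\mathcal{U}_a$, and the maximality of $M$ in $\mathcal{U}_a$ forces $M=N$. Hence $M=K_i$ for some $i\in I$ and $a\in K_i$, which establishes $K\subseteq\bigcup_{i\in I}K_i$ and completes the first assertion.

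For the ``moreover'' clause I would fix $i\in I$ and $a\in K_i$, take an arbitrary $b\in K$ with $b\leq a$, and show that $K_i\cup\{b\}$ is upward directed; the maximality of $K_i$ then forces $K_i\cup\{b\}=K_i$, i.e.\ $b\in K_i$. To see that $K_i\cup\{b\}$ is upward directed, I would check the relevant pairs: for two elements of $K_i$ the directedness of $K_i$ applies directly; for a pair consisting of some $x\in K_i$ and the new element $b$, I apply the directedness of $K_i$ to $x$ and $a$ to obtain $z\in K_i$ with $x\leq z$ and $a\leq z$, and then $b\leq a\leq z$ supplies the common upper bound; the remaining case involving $b$ alone is trivial. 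I do not anticipate a serious obstacle anywhere in this argument, as it is a routine maximal-element construction; the only steps demanding genuine attention are the verification that unions of chains of directed sets remain directed and the verification that $K_i\cup\{b\}$ inherits directedness from the relation $b\leq a$, both of which rest on the transitivity of $\leq$.
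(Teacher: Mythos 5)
Your argument is correct and follows exactly the route the paper indicates (the paper states only that the result follows "by considering the family of all upward directed subsets and making use of Zorn's lemma," leaving the details to the reader). Your verification that unions of chains of directed sets remain directed, that maximality in $\mathcal{U}_a$ implies maximality among all upward directed subsets, and that $K_i\cup\{b\}$ is directed when $b\leq a$ supplies precisely the omitted details, with no gaps.
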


%It is worth noting that each maximal upward directed subset $K_i$, $i\in I$, possesses the following property.
% For each $a\in K_i$,  it can be easily seen that
%$$
%\{b\in K \mid b\leq a\}\subset K_i.
%$$

Now, for each  \ $i\in I$, we consider the inductive system \ $\mathcal{F}_i=(K_i,\{\mathfrak{A}_a\},\{\sigma_{ba}\})$ \
over the upward directed set \ $K_i$.

Throughout the paper, for a unital algebra $\mathfrak{A}$ its unit will be denoted by $\mathbb{I}_{\mathfrak{A}}$.

The simplest example of the inductive system $\mathcal{F}_i$ is that in which $\{\mathfrak{A}_a\mid a\in K_i\}$ is \emph{a net of $C^*$-subalgebras} of a given $C^*$-algebra $\mathfrak{A}$. By this, one means that each $\mathfrak{A}_a$ is a $C^*$-subalgebra containing the unit $\mathbb{I}_\mathfrak{A}$, $\mathfrak{A}_a\subset\mathfrak{A}_b$ and $\sigma_{ba}:\mathfrak{A}_a\longrightarrow\mathfrak{A}_b$ is the inclusion mapping whenever $a,b\in K_i$ and $a\leq b$. Given such a net $\mathcal{F}_i$, the norm closure of the union of all $\mathfrak{A}_a$ is itself a $C^*$-subalgebra of $\mathfrak{A}$.

 We recall the definition and  some facts concerning the inductive limits for inductive systems of $C^*$-algebras
(see, for example, \cite[Section 11.4]{KadisonRingrose}).

\emph{The inductive limit} of this system is a pair $(\mathfrak{A}^{K_i}, \{\sigma^{K_i}_a\})$ where
 \ $\mathfrak{A}^{K_i}$ is a $C^*$-algebra and $\{\sigma^{K_i}_a:\mathfrak{A}_a\rightarrow \mathfrak{A}^{K_i} \mid a\in
K_i\}$ is a family of unital injective $*$-homomorphisms such that the following two properties are fulfilled \cite[Proposition 11.4.1]{KadisonRingrose}:

1) For every pair elements $a,b\in K_i$ satisfying the condition $a\leq b$ the diagram
\[
\xymatrix{
  \mathfrak{A}_a \ar[rr]^{\sigma_{ba}} \ar[dr]_{\sigma^{K_i}_a}
                &  &    \mathfrak{A}_b \ar[dl]^{\sigma^{K_i}_b}    \\
                & \mathfrak{A}^{K_i}                 }
\]
is commutative, that is, the equality for mappings
\begin{equation}\label{sigma}
\sigma^{K_i}_a=\sigma^{K_i}_b\circ\sigma_{ba}.
\end{equation}
holds.
Moreover, we have the~following equality:
\begin{equation}\label{cup}
\mathfrak{A}^{K_i}=\overline{\mathop\bigcup\limits_{a\in
K_i}\sigma^{K_i}_a(\mathfrak{A}_a)},
\end{equation}
where the bar means the closure of the set with respect to the norm topology in the \ $C^*$-algebra \ $\mathfrak{A}^{K_i}$.

2) \emph{The universal property.} If $\mathfrak{B}$ is a $C^*$-algebra, $\psi_a:\mathfrak{A}_a\longrightarrow\mathfrak{B}$ is
an injective $*$-homomorphism for each $a\in K_i$, and conditions analogous to those in (\ref{sigma}) and (\ref{cup})
are satisfied, then there exists a $*$-isomorphism $\theta$ from $\mathfrak{A}^{K_i}$ onto $\mathfrak{B}$ such that the diagram
\begin{equation*}
\xymatrix{
                & \mathfrak{A}_a\ar[dr]^{\psi_a} \ar[dl]_{\sigma^{K_i}_a}            \\
 \mathfrak{A}^{K_i}  \ar[rr]^{\varphi} & &     \mathfrak{B}        }
\end{equation*}
is commutative for every $a\in K_i$, that is, the equality $\psi_a=\varphi\circ\sigma^{K_i}_a $ holds.
%$$
%\mathfrak{A}^{K_i}:=\varinjlim\limits_{a\in K_i}\mathfrak{A}_a.
%$$

The inductive limit $(\mathfrak{A}^{K_i}, \{\sigma^{K_i}_a\})$  is denoted as follows:
$$
(\mathfrak{A}^{K_i}, \{\sigma^{K_i}_a\}):=\varinjlim\mathcal{F}_i.
$$
The $C^*$-algebra $\mathfrak{A}^{K_i}$ itself is often called the inductive limit.

It is well known that the inductive limit can always be constructed for an inductive system in the category of $C^*$-algebras. For the convenience of the reader, we recall briefly the components of that construction
(the details are contained in the proof of Proposition 11.4.1 in \cite{KadisonRingrose}). We shall use them in our proofs.

We consider the direct product of $C^*$-algebras
$$
\prod\limits_{a\in K_i}\mathfrak{A}_{a}:= \left\{ (F_a) \, \big| \, \|(F_a)\|=\sup_a\|F_a\|< + \infty \right\}.
$$
It is a $C^*$-algebra relative to the pointwise operations and the supremum norm. It has a norm-closed two-sided ideal $\Sigma$ consisting of all elements $(F_a)$ in the direct product for which the net $\{\|(F_a)\| \mid a\in K_i \}$ converges to 0. When $a\in K_i$ the $*$-homomorphism
$$\theta_a^{K_i}:\mathfrak{A}_{a}\longrightarrow \prod\limits_{b\in K_i}\mathfrak{A}_{b}$$
is defined at an element $A\in \mathfrak{A}_{a}$ as follows:
\begin{equation}\label{teta}
[\theta^{K_i}_a(A)](b)=
\begin{cases}
\sigma_{ba}(A), & \text{if  $a\leq b$}; \\
0, &\text{otherwise}.
\end{cases}
\end{equation}

The mapping $A\longrightarrow \theta^{K_i}_a(A)+\Sigma$ is a unital injective $*$-homomorphism $\sigma_a^{K_i}$ from $\mathfrak{A}_{a}$ into the quotient $C^*$-algebra $\prod\limits_{a\in K_i}\mathfrak{A}_{a}/\Sigma$. The family $\{\sigma_a^{K_i}(\mathfrak{A}_{a})\mid a\in K_i\}$ is a net of $C^*$-subalgebras of $\prod\limits_{a\in K_i}\mathfrak{A}_{a}/\Sigma$, and the norm closure
of $\bigcup\sigma_a^{K_i}(\mathfrak{A}_{a})$ is the inductive limit $\mathfrak{A}^{K_i}$ which is a $C^*$-subalgebra of $\prod\limits_{a\in K_i}\mathfrak{A}_{a}/\Sigma$.

It is worth noting that below we shall denote by the same symbols $\Sigma$ and $\theta_a^{K_i}$ the corresponding ideals and mappings for distinct inductive systems.

Finally, we construct the direct product for the inductive limits of the functors \ $\mathcal{F}_i$ \ denoted by
$$
\mathfrak{M}_{\mathcal{F}}:=\prod\limits_{i\in I}\mathfrak{A}^{K_i}= \left\{ (a_i) \, \big| \, \|(a_i)\|=\sup_i\|a_i\|< \infty \right\}.
$$

For additional results in the theory of $C^*$-algebras we refer the reader, for example,  \cite[Ch.\,4, \S\,7]{helemskii} and \cite{murphy}. Necessary facts from the theory of categories and functors are contained, for example, in \cite[ Ch.\,0, \S\,2]{helemskii} and \cite{BucurDeleanu1968}.

\section{Topology on the index set I}
Throughout this section, $K$ is an arbitrary partially ordered set that is not necessarily directed. By Proposition~\ref{Kdecomposition}, we have
equality (\ref{KbigcupKi}).
%$$
%K=\bigcup\limits_{i\in I}K_i,
%$$
%where \ $\left \{ \, K_i \, | \, i\in I \, \right \}$ \ is the family of all maximal
% upward directed subsets of  \, $(K, \, \leq )$.

Now we endow the index set $I$ with a topology. To this end, for every element $a\in K$  we define the set \ $U_a=\{i\in I \ : \ a\in K_i\} $.
Obviously,  \ $U_a$ is a non-empty set.

Using the property of maximal upward directed sets $K_i$ from Proposition~\ref{Kdecomposition}, it is straightforward to prove

\begin{lemma}\label{fortopology}
If $a,b\in K$ such that $a\leq b$ then $U_b\subset U_a$.
\end{lemma}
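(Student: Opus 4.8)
The plan is to prove the inclusion $U_b \subset U_a$ by a direct element-chase, using the downward-closure property recorded in the second assertion of Proposition~\ref{Kdecomposition}. First I would fix an arbitrary index $i \in U_b$; by the definition of $U_b$ this means precisely that $b \in K_i$. The goal is then to verify that $i \in U_a$, that is, that $a \in K_i$.

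The key step is to invoke the ``moreover'' clause of Proposition~\ref{Kdecomposition}, applied to the element $b \in K_i$: the lower set $\{c \in K \mid c \leq b\}$ is entirely contained in $K_i$. Since the hypothesis gives $a \leq b$, the element $a$ belongs to this lower set, and hence $a \in K_i$. By the definition of $U_a$ this yields $i \in U_a$.

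Because $i \in U_b$ was arbitrary, every index in $U_b$ lies in $U_a$, which is exactly the asserted inclusion $U_b \subset U_a$. I do not expect any genuine obstacle here: the entire content of the lemma is packaged in the downward-closure of the maximal upward directed sets $K_i$, and once that property is quoted the argument is a one-line set-membership deduction. The only point worth care is to read the quantifiers in Proposition~\ref{Kdecomposition} correctly, namely that the stated containment holds for each fixed $i$ and each element of $K_i$, so that it may legitimately be applied to the particular pair consisting of the index $i$ and the element $b \in K_i$ arising above.
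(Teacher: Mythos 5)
Your proof is correct and follows exactly the route the paper intends: the paper omits the proof but explicitly points to the downward-closure property in the ``moreover'' clause of Proposition~\ref{Kdecomposition}, which is precisely the fact you invoke. The element-chase from $i\in U_b$ to $b\in K_i$ to $a\in K_i$ to $i\in U_a$ is the straightforward argument the authors had in mind.
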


We shall now show that the family of sets $\{U_a \mid a\in K\}$ generates a topology
on the index set $I$.

\begin{proposition}
The family $\{U_a \mid a\in K\}$ is a base for a topology on the set $I$.
\end{proposition}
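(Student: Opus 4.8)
The plan is to verify the two standard conditions that characterise a base for a topology on $I$: first, that the family covers $I$, i.e. $\bigcup_{a\in K}U_a=I$; and second, that for any two members $U_a,U_b$ of the family and any point $i\in U_a\cap U_b$, there is a member $U_c$ with $i\in U_c\subseteq U_a\cap U_b$. Once both are established, the family $\{U_a\mid a\in K\}$ generates a topology whose open sets are the arbitrary unions of members of this family.

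First I would check the covering condition. Every $i\in I$ indexes a maximal upward directed subset $K_i$, and such a set is non-empty whenever $K\neq\varnothing$: the empty set cannot be maximal among the upward directed subsets, since any singleton $\{a\}$ is itself upward directed and strictly contains it. Hence I may choose some $a\in K_i$, and then $a\in K_i$ means precisely $i\in U_a$, so $i$ lies in the union. As each $U_a\subseteq I$ trivially, this yields $\bigcup_{a\in K}U_a=I$.

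The heart of the argument is the intersection condition, and this is where the directedness of the sets $K_i$ together with Lemma~\ref{fortopology} enters. Suppose $i\in U_a\cap U_b$; by the definition of these sets, both $a\in K_i$ and $b\in K_i$. Because $K_i$ is upward directed, there exists $c\in K_i$ with $a\leq c$ and $b\leq c$. Applying Lemma~\ref{fortopology} to each inequality gives $U_c\subseteq U_a$ and $U_c\subseteq U_b$, whence $U_c\subseteq U_a\cap U_b$. Moreover $c\in K_i$ gives $i\in U_c$, so $i\in U_c\subseteq U_a\cap U_b$, exactly as required.

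I expect the main (and essentially the only) obstacle to be the intersection condition, specifically the step of producing the common upper bound $c$ inside $K_i$. The point is that one must use that $a$ and $b$ belong to the \emph{same} directed set $K_i$, not merely to $K$, so that the upward directedness hypothesis applies and the bound $c$ can be chosen within $K_i$; this last membership is what guarantees $i\in U_c$. Once such a $c$ has been found, Lemma~\ref{fortopology} disposes of the remaining inclusions automatically, and no further estimates are needed.
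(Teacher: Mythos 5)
Your proof is correct and follows essentially the same route as the paper: both verify the two base conditions, obtain the common upper bound $c\in K_i$ from the upward directedness of $K_i$, and conclude $U_c\subseteq U_a\cap U_b$ via Lemma~\ref{fortopology}. Your treatment of the covering condition is slightly more detailed (justifying that each $K_i$ is non-empty), but the substance is identical.
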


\begin{proof}
To prove the assertion we use Proposition~1.2.1 from \cite{EngelkingGT}. According to it, we need to check two properties of a base.

Firstly, for any $U_a$ and $U_b$, $a,b\in K$, and every point $i\in U_a\bigcap U_b$ there exists
$U_c$, $c\in K$, such that $i\in U_c\subset U_a\bigcap U_b$. Indeed, since $i\in U_a\bigcap U_b$
the elements $a$ and $b$ belong to the maximal upward directed set $K_i$. Hence, there is an element $c\in K_i$ satisfying the conditions $a\leq c$ and $b \leq c$. By Lemma~\ref{fortopology}, the set $U_c$ satisfies the required property.

Secondly, it is clear that for every $i\in I$ there exists an element $a\in K$ such that $i\in U_a$.
\qed
\end{proof}

We denote by $\tau$ the topology generated by the base $\{U_a \mid a\in K\}$. Thus, $\tau$ is the family of all subsets of $I$ that are unions of subfamilies of  $\{U_a \mid a\in K\}$.

Below we prove the propositions describing properties of the topological space $(I,\tau)$ and
give several examples.

\begin{proposition}
The  topological space $(I,\tau)$ is a $T_1$-space.
\end{proposition}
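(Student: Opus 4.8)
The plan is to prove the $T_1$ separation axiom in its pointwise form: I would show that for every pair of distinct indices $i,j \in I$ there is an open set containing $i$ but not $j$. Since the sets $U_a$ form a base for $\tau$, it suffices to exhibit an element $a \in K$ for which $i \in U_a$ and $j \notin U_a$; by the definition of $U_a$ this is exactly the requirement that $a \in K_i$ and $a \notin K_j$. Once such an $a$ is found, $U_a$ is the desired neighbourhood, and swapping the roles of $i$ and $j$ yields a neighbourhood of $j$ missing $i$. Because these two conditions together are equivalent to $T_1$ (equivalently, to each singleton $\{i\}$ being closed), the proposition would follow.

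The heart of the argument is therefore to locate, for $i \neq j$, an element of $K_i$ that does not lie in $K_j$. First I would note that the family $\{K_i \mid i \in I\}$ is indexed so that distinct indices label distinct sets, whence $K_i \neq K_j$. The key step is then to observe that two distinct members of this family cannot be nested: if $K_i \subseteq K_j$, then since $K_i$ is a \emph{maximal} upward directed subset and $K_j$ is an upward directed subset containing it, maximality forces $K_i = K_j$, a contradiction. Hence $K_i \not\subseteq K_j$, so there exists $a \in K_i \setminus K_j$ as required; symmetrically there is $b \in K_j \setminus K_i$.

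The only genuine obstacle is this incomparability step, and it rests entirely on the maximality built into Proposition~\ref{Kdecomposition}; the remainder is a direct translation between the membership conditions defining $U_a$ and the separation property. I therefore expect the write-up to be short: fix distinct $i,j$, invoke maximality to obtain $a \in K_i \setminus K_j$, and conclude that $U_a$ is an open set separating $i$ from $j$, so that $(I,\tau)$ is a $T_1$-space.
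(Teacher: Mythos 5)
Your proposal is correct and follows essentially the same route as the paper: take distinct $i,j$, produce $a\in K_i\setminus K_j$, and observe $i\in U_a$, $j\notin U_a$. In fact you are slightly more careful than the paper's own proof, which passes directly from $K_i\neq K_j$ to the existence of $a\in K_i\setminus K_j$ without spelling out the maximality argument that rules out $K_i\subsetneq K_j$.
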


\begin{proof}
Take any pair of distinct indices $i,j\in I$. The condition $i\neq j$ implies that $K_i\neq K_j$.
 Hence, we can take an element $a\in K_i\setminus
K_j$. Then we have the conditions $i\in U_a$ and $j\notin U_a$.

\qed
\end{proof}

\begin{corollary}
For every index $i\in I$ the equality $\mathop\bigcap\limits_{a\in K_i}U_a=\{i\}$ holds.
\end{corollary}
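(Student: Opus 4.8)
The plan is to prove the two inclusions separately. First I would establish $\{i\}\subseteq\bigcap_{a\in K_i}U_a$, which is immediate from the definition: since $U_a=\{j\in I : a\in K_j\}$, the fact that $a\in K_i$ says precisely that $i\in U_a$. Thus $i$ belongs to every member of the family $\{U_a \mid a\in K_i\}$, hence to their intersection.

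The substance lies in the reverse inclusion $\bigcap_{a\in K_i}U_a\subseteq\{i\}$. I would take an arbitrary $j\in\bigcap_{a\in K_i}U_a$ and aim to show $j=i$. By definition of the intersection, $j\in U_a$ for every $a\in K_i$, and unwinding the definition of $U_a$ this reads $a\in K_j$ for every $a\in K_i$; that is, $K_i\subseteq K_j$.

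The key step is then a maximality argument. By Proposition~\ref{Kdecomposition}, $K_j$ is a maximal upward directed subset of $K$, so in particular it is upward directed, while $K_i$ is itself maximal upward directed. Since $K_i\subseteq K_j$ and $K_j$ is an upward directed set, the maximality of $K_i$ forces $K_i=K_j$. Finally, the correspondence $i\mapsto K_i$ is injective---this is exactly the observation underlying the preceding $T_1$ proof, where $i\neq j$ was shown to entail $K_i\neq K_j$---so $K_i=K_j$ gives $i=j$. Combining the two inclusions yields the asserted equality.

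I do not anticipate a serious obstacle; the proof is essentially a direct unwinding of the definition of $U_a$ together with the decomposition of Proposition~\ref{Kdecomposition}. The one point deserving care is the maximality step: one must verify that $K_j$ genuinely qualifies as an upward directed set containing $K_i$, so that the maximality of $K_i$ is applicable and collapses the inclusion to an equality. Everything else is routine.
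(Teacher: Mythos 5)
Your proof is correct and follows essentially the same route as the paper: the corollary is exactly the $T_1$ separation property restricted to the basic neighbourhoods $U_a$, $a\in K_i$, and your maximality argument (if $K_i\subseteq K_j$ with both maximal upward directed, then $K_i=K_j$ and hence $i=j$) is precisely the observation underlying the paper's proof that $(I,\tau)$ is $T_1$. The easy inclusion $\{i\}\subseteq\bigcap_{a\in K_i}U_a$ and the identification of the intersection with $\{i\}$ are handled as the paper intends, so there is nothing to add.
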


\begin{corollary}
For every index $i\in I$ the one-point set $\{i\}$ is closed.
\end{corollary}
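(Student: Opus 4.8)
The plan is to deduce this directly from the preceding proposition. I would recall the standard characterization that a topological space is a $T_1$-space if and only if every one-point subset is closed; since the previous proposition already establishes that $(I,\tau)$ is a $T_1$-space, the assertion follows at once. Nevertheless, I would prefer to record a self-contained argument that makes the role of the base $\{U_a \mid a\in K\}$ explicit, so that the corollary does not rest solely on the abstract equivalence.

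To this end, I would fix an index $i\in I$ and show that the complement $I\setminus\{i\}$ is open. First I would take an arbitrary index $j\in I$ with $j\neq i$. As in the proof of the $T_1$ property, the inequality $j\neq i$ forces $K_j\neq K_i$, so I can choose an element $a\in K_j\setminus K_i$. Then $j\in U_a$ while $i\notin U_a$; that is, $U_a$ is a basic open neighbourhood of $j$ contained in $I\setminus\{i\}$.

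Since every point of $I\setminus\{i\}$ admits such a basic open neighbourhood lying inside $I\setminus\{i\}$, this set is a union of members of the base $\{U_a\mid a\in K\}$ and is therefore open in $\tau$. Consequently $\{i\}$ is closed, as claimed.

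I do not expect any genuine obstacle here: the argument is routine, and the only step requiring attention is the separation step, where one must produce, for each $j\neq i$, an element of $K_j$ not belonging to $K_i$, exactly as in the $T_1$ proof. In particular, no appeal to the intermediate corollary $\bigcap_{a\in K_i}U_a=\{i\}$ is needed, since that corollary describes an intersection of open sets rather than a closed one.
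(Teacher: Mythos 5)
Your proof is correct and follows the same route the paper intends: the corollary is stated without proof as an immediate consequence of the preceding proposition that $(I,\tau)$ is a $T_1$-space, which is exactly your first paragraph. Your explicit verification that $I\setminus\{i\}$ is a union of basic open sets $U_a$ with $a\in K_j\setminus K_i$ is a sound unwinding of that equivalence (the choice of $a$ is legitimate because the maximality of $K_j$ forbids $K_j\subseteq K_i$), so nothing is missing.
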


The next example shows that $(I,\tau)$ may not be a Hausdorff space.

\begin{example}\label{tau:nonHausdorff}
We consider the set of points with integer coordinates
$$
K:=\left \{(x,y) \mid x\in  \{ -1; 0; 1 \}, y\in \mathbb{Z}\right \}.
$$
A partial order $\leq$ on the set $K$ is defined in the following way:
\[
(x_1,y_1)\leq(x_2,y_2) :=
\begin{cases}
x_1,x_2\in \{-1;1\},\, x_1=x_2, \, y_1\leq y_2\,; \\
x_1\in \{-1;1\},\, x_2=0, \, y_1< y_2.
\end{cases}
\]
It is straightforward to check that the pair  $(K,\leq)$  is a partially ordered set, which is not upward directed.

 One has the representation of $K$ as the union of maximal upward directed sets $K_i$ indexed by the set of all integers $\mathbb{Z}$ together with two symbols $-\infty$ and  $+\infty$, that is, $I=\mathbb{Z}\cup \{-\infty; +\infty \}$\,:
$$
K=\bigcup\limits_{i= -\infty}^{+\infty}K_i, \quad \text{where} \quad K_{-\infty}:=\left\{(-1,y)\mid y\in \mathbb{Z} \right\};\quad  K_{+\infty}:=\left\{(1,y)\mid y\in \mathbb{Z} \right\}
$$
$$
\text{and} \quad K_i:=\left\{(0,i)\right \} \bigcup \left\{(x,y) \mid  x\in \{-1;1\}, y<i, y\in \mathbb{Z} \right\} \quad \text{for each} \quad i\in \mathbb{Z}.
$$

A base $\{U_{(x,y)}\mid x\in \{-1;0;1\}, y\in \mathbb{Z} \}$ for the topology $\tau$ on the index set $I$ consists of the sets of three types, namely,
$$
U_{(-1,y)}:=\{-\infty\} \cup \{ i\in \mathbb{Z} \mid i>y\}; \
U_{(1,y)}:=\{+\infty\}\cup \{i\in \mathbb{Z} \mid i>y\}; \
U_{(0,y)}:=\{y\}.
$$
Since any two neighbourhoods of indices $-\infty$ and $+\infty$ have a non-empty intersection the topological space $(I,\tau)$ is not a Hausdorff space.
\end{example}

\begin{proposition}
For $a\in K$, the set $K^a$ is upward directed if and only if the neighbourhood $U_a$ consists of a single point.
\end{proposition}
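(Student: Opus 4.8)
The plan is to read $K^a$ as the set of elements above $a$, namely $K^a=\{b\in K\mid a\leq b\}$ (this is the only reading consistent with the preceding example, since the down-set $\{b\mid b\leq a\}$ always has $a$ as a maximum and would be trivially directed), and to prove the two implications separately. The only external input I expect to use is the ``moreover'' part of Proposition~\ref{Kdecomposition}: whenever $a\leq b$ and $b$ lies in a maximal upward directed set $K_j$, the whole set $\{e\mid e\leq b\}$, and in particular $a$, lies in $K_j$ as well. I note at the outset that $U_a$ is always nonempty, so ``$U_a$ consists of a single point'' is genuinely a uniqueness statement about the maximal upward directed sets containing $a$.

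For the implication ``$U_a$ a single point $\Rightarrow$ $K^a$ upward directed'', suppose $U_a=\{i\}$. I would first establish the inclusion $K^a\subseteq K_i$: given $b\in K^a$, so $a\leq b$, the decomposition (\ref{KbigcupKi}) places $b$ in some $K_j$, and the ``moreover'' property forces $a\in K_j$, i.e.\ $j\in U_a=\{i\}$, whence $b\in K_i$. Once $K^a\subseteq K_i$ is known, upward directedness of $K^a$ is immediate: for $b,c\in K^a\subseteq K_i$, directedness of $K_i$ yields $d\in K_i$ with $b\leq d$ and $c\leq d$, and since $a\leq b\leq d$ we get $d\in K^a$.

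For the converse, suppose $K^a$ is upward directed, and introduce the saturation $L:=\{b\in K\mid b\leq d \text{ for some } d\in K^a\}$, which contains $a$. The first step is to check that $L$ is upward directed: if $b_1\leq d_1$ and $b_2\leq d_2$ with $d_1,d_2\in K^a$, then a common upper bound $d\in K^a$ of $d_1,d_2$ (available because $K^a$ is directed) is also a common upper bound of $b_1,b_2$ lying in $L$. The second step is to show that any maximal upward directed set $K_i$ with $a\in K_i$ coincides with $L$: the inclusion $K_i\subseteq L$ follows by taking $b\in K_i$ and using directedness of $K_i$ to find $c\in K_i$ above both $a$ and $b$ (so $c\in K^a$ and $b\leq c$, giving $b\in L$); since $L$ is then an upward directed set containing the maximal set $K_i$, maximality forces $K_i=L$. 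Because $L$ depends only on $a$, every maximal upward directed set containing $a$ equals this fixed $L$, so $U_a$ is a singleton.

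The directedness bookkeeping is routine; the one step that needs care is the appeal to maximality in the converse. The naive approach would be to argue directly that two maximal sets $K_i,K_j$ containing $a$ are equal by transferring elements between them, but the common upper bounds produced inside $K^a$ need not land in $K_i$ or $K_j$. Passing to the auxiliary directed set $L$ and then using maximality of $K_i$ to collapse $K_i$ onto $L$ is precisely what sidesteps this obstacle.
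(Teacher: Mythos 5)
Your proof is correct, and your reading of $K^a$ as the up-set $\{b\in K\mid a\leq b\}$ agrees with the paper's (implicit) usage, cf.\ the set $K_i^a$ in the proof of the theorem on isolated points. The implication ``$U_a$ a singleton $\Rightarrow K^a$ upward directed'' is essentially the paper's sufficiency argument: establish $K^a\subseteq K_j$ and pull common upper bounds down from $K_j$. For the converse the paper takes a different route: it first invokes Zorn's lemma to embed the directed set $K^a$ into some maximal $K_j$, and then argues by contradiction --- if some $i\neq j$ also belonged to $U_a$, an element $c\in K_i\setminus K_j$ would admit a common upper bound $d\in K_i$ with $a$, forcing $d\in K^a\subseteq K_j$ and then $c\in K_j$ by the down-closedness of maximal upward directed sets (the ``moreover'' clause of Proposition~\ref{Kdecomposition}), a contradiction. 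Your argument instead constructs the down-saturation $L$ of $K^a$, checks it is upward directed, and shows it absorbs --- hence by maximality equals --- every maximal upward directed set containing $a$; this is correct as written (the key point you flag, that common upper bounds must be produced inside $K^a$ rather than transferred between $K_i$ and $K_j$, is exactly where a naive argument would break). The two proofs are of comparable length; the paper's avoids introducing the auxiliary set $L$, while yours avoids the proof by contradiction and yields slightly more, namely an explicit description of the unique maximal upward directed set containing $a$ as the down-saturation of $K^a$.
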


\begin{proof}
\emph{Necessity.} Assume that $K^a$ is an upward directed set. By Zorn's Lemma, there exists an index $j\in I$ such that $K^a$ is contained in the maximal upward directed set $K_j$. Hence,
$a\in K_j$. Thus, by the definition of the neighbourhood $U_a$, we get the inclusion
\begin{equation}\label{isubsetUa}
\{j\}\subset U_a.
\end{equation}

Next, we shall show the reverse inclusion. In order to obtain a contradiction, we suppose that there exists an index $i\in I$ distinct from the index $j$ such that $i\in U_a$. Then, we have $a\in K_i$ and $K_i\neq K_j$.

In this case we can take an element
\begin{equation}\label{cinKisetminusKj}
c\in K_i \setminus K_j.
\end{equation}
Since $a,c \in K_i$ and $K_i$ is an upward directed set there exists an element $d\in K_i$ satisfying the conditions $a\leq d$ as well as $c\leq d$. The first condition implies that $d\in K^a \subset K_j$. Obviously, the latter together with the maximality property of  the upward directed set $K_j$ yields the inclusion
$c\in K_j$. This contradicts condition (\ref{cinKisetminusKj}). Therefore, we obtain the inclusion
that is reverse to (\ref{isubsetUa}). Thus, the equality
\begin{equation}\label{Uaj}
U_a = \{j\}
\end{equation}
holds, as required.

\emph{Sufficiency.} Let equality (\ref{Uaj}) be valid for some index $j\in I$. By the the definition of the neighbourhood $U_a$, the element $a$ belongs to the unique maximal upward directed set $K_j$.

We claim that the following inclusion holds:
\begin{equation}\label{KasubsetKj}
K^a\subset K_j.
\end{equation}
Indeed, take any $b\in K^a$. Then $a\leq b$ and, by Lemma~\ref{fortopology}, $U_b\subset U_a$. Therefore, $U_b=\{j\}$ and $b\in K_j$. Consequently,
we have inclusion (\ref{KasubsetKj}), as claimed.

To show that $K^a$ is an upward directed set we take two elements $b,c\in K^a$. By (\ref{KasubsetKj}), we have $b,c\in K_j$. Since $K_j$ is an upward directed set there exists
$d\in K_j$ such that  both the conditions $ b\leq d$ and $c\leq d$  hold. Obviously, we have
$d\in K^a$. Thus, the set $K^a$ is upward directed, as required. \qed

\end{proof}

Now we give  examples of  locally compact and discrete topological spaces.

\begin{example}\label{tau:locallycompact}
As the set $K$ we consider the lower half-plane without the axis $y=0$, that is,
$$K=\{(x,y) \ \mid \ x,y\in \mathbb{R}, y<0\}.$$

We define a partial order $\leq$ on \ $K$ \ as follows. Let us fix a positive number $a\in \mathbb{R}$\,. Then
we put
%\[
%(x_1,y_1)\leq(x_2,y_2) :=
%\begin{cases}
%x_1=x_2, & \text{if $y_1=y_2$;} \\
%y_2-y_1>a|x_2-x_1|, & \text{if $y_1\neq y_2$.}
%\end{cases}
%\]

\[
(x_1,y_1)\leq(x_2,y_2) :=
\begin{cases}
x_1=x_2 \quad \text{and} \quad y_1= y_2; \\
y_2-y_1>a|x_2-x_1|.
\end{cases}
\]
It is easily verified that the pair $(K,\leq)$  is a partially ordered set. Moreover, it is worth noting  that
this set is not upward directed.

We have the representation of $K$ as the union of maximal upward directed sets $K_i$ indexed by the set of all real numbers, that is, $I=\mathbb{R}$\,:
$$
K=\bigcup\limits_{i\in
\mathbb{R}}K_i, \qquad \text{where} \quad K_i:=\left\{(x,y)\in K \mid -y>a|i-x| \right\}.
$$

Taking a point $(x_0,y_0)\in K$, one can easy see that
$$U_{(x_0,y_0)}=\Bigl\{i\in\mathbb{R} \ \mid \
x_0+\frac{y_0}{a}<i<x_0-\frac{y_0}{a}\Bigr\}.$$

Thus, in this example the topology $\tau$ coincides with the natural topology on the set $\mathbb{R}$ wich
is locally compact.

\end{example}

\begin{example}\label{tau:discrete}
Here, we take as $K$ the points with integer coordinates in the lower half-plane including the axis $y=0$, that is,
$$K=\{(n,m) \ : \ n,m\in \mathbb{Z}, m\leq 0\}.$$

We define a partial order $\leq$ on the set \ $K$ \ by the following rule:
$$(n_1,m_1)\leq (n_2,m_2) \qquad \text{\emph{if and only if}} \quad m_2-m_1\geq
n_2-n_1\geq 0.$$
The pair $(K,\leq)$  is a partially ordered set. It is not upward directed as well.

We have the representation of $K$ as the union of maximal upward directed sets $K_i$ indexed by the set of all integers, that is, $I=\mathbb{Z}$\,:
$$
K=\bigcup\limits_{i\in
\mathbb{Z}}K_i, \qquad \text{where} \quad K_i:=\left\{(n,m)\in K \mid  -m\geq i-n\geq 0 \right\}.
$$

For any point $(n_0,m_0)\in K$, in the space $I$ we have  the neighbourhood
$$U_{(n_0,m_0)}=\{i\in\mathbb{Z} \ : \
n_0\leq i\leq n_0-m_0\}.$$
Since the equality $U_{(n,0)}=\{n\}$ holds we see that every point in the space
 $I=\mathbb{Z}$ is isolated. Thus, we conclude that the topology $\tau$
is discrete.

\end{example}

\section{Inductive limits}

Let us consider an inductive system
$\mathcal{F}=(K,\{\mathfrak{A}_a\},\{\sigma_{ba}\})$, where
$\mathfrak{A}_a$ is an arbitrary unital $C^*$-algebra.
Then for each index $i\in I$ we can construct an inductive system $\mathcal{F}_i$ and its inductive limit $\mathfrak{A}^{K_i}$.

Further, we take any $a\in K$ and consider the direct product of $C^*$-algebras
 $$\mathfrak{B}_a:=\prod\limits_{i\in
U_a}\mathfrak{A}^{K_i}.$$

Recall, by Lemma~\ref{fortopology}, for every pair $a,b\in K$ satisfying the condition $a\leq b$,  we have the~inclusion $U_b\subseteq U_a$. Hence, the $*$-homomorphism $\tau_{ba}:\mathfrak{B}_a\longrightarrow\mathfrak{B}_b$ between $C^*$-algebras
given by the~rule
$$
\tau_{ba}(f)(j)=f(j),
$$
where $f\in \mathfrak{B}_a$ and $j\in U_b$, is well-defined.

Obviously, we have the equality $\tau_{ca}=\tau_{cb}\circ\tau_{ba}$ whenever $a,b,c\in K$ such that  the condition \ $a\leq b\leq c$ \ holds.

Thus we have constructed the inductive system of $C^*$-algebras $(K,\{\mathfrak{B}_a\},\{\tau_{ba}\})$.
Therefore, for each index $i\in I$ we can consider the inductive system $(K_i,\{\mathfrak{B}_a\},\{\tau_{ba}\})$ and its inductive limit
$$(\mathfrak{B}^{K_i}, \{\tau^{K_i}_a\} ):=\varinjlim(K_i,\{\mathfrak{B}_a\},\{\tau_{ba}\}).$$

For the direct product of these inductive limits we introduce the following notation:
$$\widehat{\mathfrak{M}}_{\mathcal{F}}:=\prod\limits_{i\in I}\mathfrak{B}^{K_i}.$$

Further, we prove the theorems that show the connections between  the structures of $C^*$-algebras $\mathfrak{A}^{K_i}$, $\mathfrak{B}^{K_i}$, $\mathfrak{M}_{\mathcal{F}}$, $\widehat{\mathfrak{M}}_{\mathcal{F}}$ and the properties of the topological space $(I,\tau)$.

\begin{theorem}\label{II} For every index $i$ in the set $I$  the algebra $\mathfrak{A}^{K_i}$ is isomorphic to a subalgebra of $\mathfrak{B}^{K_i}$.
\end{theorem}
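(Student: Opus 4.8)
The plan is to realize $\mathfrak{A}^{K_i}$ as the closed $C^*$-subalgebra of $\mathfrak{B}^{K_i}$ generated by the images of a compatible family of embeddings, and then to invoke the universal property of the inductive limit (property 2 above) to identify that subalgebra with $\mathfrak{A}^{K_i}$. Fix $i\in I$. The first step is to build, for each $a\in K_i$, a diagonal map $\iota_a\colon\mathfrak{A}_a\to\mathfrak{B}_a=\prod_{j\in U_a}\mathfrak{A}^{K_j}$. Note that $a\in K_i$ is equivalent to $i\in U_a$, and that for every $j\in U_a$ one has $a\in K_j$, so the homomorphism $\sigma^{K_j}_a\colon\mathfrak{A}_a\to\mathfrak{A}^{K_j}$ is available. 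I would define $\iota_a$ by $\iota_a(A)(j)=\sigma^{K_j}_a(A)$ for $j\in U_a$. Because an injective $*$-homomorphism of $C^*$-algebras is isometric, each coordinate satisfies $\|\sigma^{K_j}_a(A)\|=\|A\|$; hence the supremum over $j\in U_a$ is finite, so $\iota_a$ indeed lands in $\mathfrak{B}_a$, is itself isometric (in particular injective), and is clearly a unital $*$-homomorphism.

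The second step is to check that this family is a morphism of inductive systems from $(K_i,\{\mathfrak{A}_a\},\{\sigma_{ba}\})$ to $(K_i,\{\mathfrak{B}_a\},\{\tau_{ba}\})$, that is, $\tau_{ba}\circ\iota_a=\iota_b\circ\sigma_{ba}$ whenever $a\leq b$ in $K_i$. Evaluating both sides at an arbitrary $j\in U_b$ (recall $U_b\subseteq U_a$ by Lemma~\ref{fortopology}), the left-hand side gives $\sigma^{K_j}_a(A)$ while the right-hand side gives $\sigma^{K_j}_b(\sigma_{ba}(A))$; these agree precisely by the commutativity relation (\ref{sigma}) applied inside each system $\mathcal{F}_j$. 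Composing with the canonical maps into the limit, I would then set $\psi_a:=\tau^{K_i}_a\circ\iota_a\colon\mathfrak{A}_a\to\mathfrak{B}^{K_i}$. Each $\psi_a$ is injective as a composition of injective maps, and the compatibility $\psi_a=\psi_b\circ\sigma_{ba}$ follows from the relation just verified together with $\tau^{K_i}_a=\tau^{K_i}_b\circ\tau_{ba}$, which is (\ref{sigma}) for the system $\{\mathfrak{B}_a\}$.

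The final step is to put $\mathfrak{C}:=\overline{\bigcup_{a\in K_i}\psi_a(\mathfrak{A}_a)}$, a $C^*$-subalgebra of $\mathfrak{B}^{K_i}$. The family $\{\psi_a\colon\mathfrak{A}_a\to\mathfrak{C}\}$ then consists of injective $*$-homomorphisms satisfying the analogues of (\ref{sigma}) and (\ref{cup}) with target $\mathfrak{C}$, so the universal property yields a $*$-isomorphism of $\mathfrak{A}^{K_i}$ onto $\mathfrak{C}$; since $\mathfrak{C}\subseteq\mathfrak{B}^{K_i}$, this proves the claim. I expect the only genuinely delicate point to be the correct application of the universal property: it produces an isomorphism onto an algebra that equals the closure of the union of the images, so one must target the generated subalgebra $\mathfrak{C}$ rather than all of $\mathfrak{B}^{K_i}$ (the latter is typically strictly larger). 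The remaining work, namely the well-definedness of $\iota_a$ and the index bookkeeping around $U_a$, $U_b$ and the equivalence $a\in K_i\Leftrightarrow i\in U_a$, is routine once the isometry of injective $*$-homomorphisms is used.
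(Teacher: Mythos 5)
Your construction is essentially the paper's own: the diagonal embeddings $\iota_a(A)(j)=\sigma^{K_j}_a(A)$ (the paper's $\sigma_a^{U_a}$), the intertwining identity $\tau_{ba}\circ\iota_a=\iota_b\circ\sigma_{ba}$, and the passage to the limit via the universal property, aimed at the closure of the union of the images. Your remark that one must target the generated subalgebra $\mathfrak{C}$ rather than all of $\mathfrak{B}^{K_i}$ is correct and consistent with what the paper does.

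There is, however, one genuine gap, and it is exactly the point on which the paper spends most of its effort. You assert that $\psi_a=\tau^{K_i}_a\circ\iota_a$ is ``injective as a composition of injective maps.'' This presumes that the canonical map $\tau^{K_i}_a\colon\mathfrak{B}_a\to\mathfrak{B}^{K_i}$ is injective, which is not automatic and is false in general: the bonding maps $\tau_{ba}$ of the system $(K_i,\{\mathfrak{B}_a\},\{\tau_{ba}\})$ restrict functions from $U_a$ to the possibly smaller set $U_b$, hence are not injective, so the canonical maps into the limit need not be injective either. Concretely, a function $f\in\mathfrak{B}_a$ supported at an index $j$ with $j\notin U_x$ for all sufficiently large $x\geq a$ in $K_i$ satisfies $\tau^{K_i}_a(f)=0$; this is also why the paper proves injectivity of $\tau^{K_i}_b$ separately, under the extra hypothesis that $i$ is isolated, in the proof of Theorem~\ref{theoremaboutisolated}. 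What must be shown is that the composition is injective even though its second factor may fail to be. The paper does this inside $\prod_{x\in K_i}\mathfrak{B}_x/\Sigma$: for $A\neq 0$, the relation $\tau_{xa}(\iota_a(A))=\iota_x(\sigma_{xa}(A))$ together with the isometry of each $\iota_x$ gives $\bigl\|[\theta^{K_i}_a(\iota_a(A))](x)\bigr\|=\|A\|$ for every $x\in K_i$ with $a\leq x$, so the net of norms equals $\|A\|$ on a cofinal set, does not converge to $0$, and hence $\theta^{K_i}_a(\iota_a(A))\notin\Sigma$. You already have both ingredients (the isometry of $\iota_a$ and the intertwining relation), so the repair is short, but as written the injectivity of $\psi_a$ is unjustified.
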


\begin{proof}
 To construct an injective $*$-homomorphism from the algebra $\mathfrak{A}^{K_i}$ into the algebra $\mathfrak{B}^{K_i}$ we proceed as follows.

Take an arbitrary neighbourhood $U_a$ of the point $i$. For every index $j\in U_a$ we consider the inductive limit
$$
(\mathfrak{A}^{K_j}, \{\sigma_b^{K_j}\})=\varinjlim(K_j,\{\mathfrak{A}_b\},\{\sigma_{cb}\}).
$$

Using the family of injective $*$-homomorphisms $\{\sigma_a^{K_j}:\mathfrak{A}_a\longrightarrow
\mathfrak{A}^{K_j} \mid j\in U_a\}$, we define a $*$-homomorphism of $C^*$-algebras
$$\sigma_{a}^{U_a}:\mathfrak{A}_a\longrightarrow
\mathfrak{B}_a=\prod\limits_{j\in U_a}\mathfrak{A}^{K_j}%:A\longmapsto ((\sigma_a^{K_j}(A))_j).
$$
by means of the formula \ $[\sigma_{a}^{U_a}(A)](j):=\sigma_a^{K_j}(A)$,
where \ $A\in \mathfrak{A}_a, j\in U_a$.
Note that the injectivity of  the $*$-homomorphisms $\sigma_a^{K_j}$ implies the injectivity of $\sigma_{a}^{U_a}$. Moreover, the following equalities hold:
\begin{equation}\label{normsigmaaUaAj}
\|[\sigma_{a}^{U_a}(A)](j) \|=\|\sigma_a^{K_j}(A)\|=\|A\|.
\end{equation}

Now we take two inductive systems $(K_i,\{\mathfrak{A}_a\},\{\sigma_{ba}\})$ and $(K_i,\{\mathfrak{B}_a\},\{\tau_{ba}\})$. For every pair of elements $a,b\in K_i$ satisfying the
condition $a\leq b$ we have the diagram
%\begin{equation}\label{digrammathfrakAasigmaaUasigmaba}
%\vcenter{
%\xymatrix{
% \mathfrak{A}_a \ar[dd]_{\sigma_{a}^{U_a}} \ar[rr]^{\sigma_{ba}}
%                &   & \mathfrak{A}_b \ar[dd]^{\sigma_{b}^{U_b}}  \\
%                \\
%  \mathfrak{B}_a  \ar[rr]_{\tau_{ba}}
%                & & \mathfrak{B}_b             }
%}
%\end{equation}

\begin{equation}\label{digrammathfrakAasigmaaUasigmaba}
\vcenter{
\xymatrix{
 \mathfrak{A}_a \ar[d]_{\sigma_{a}^{U_a}} \ar[r]^{\sigma_{ba}}
                &    \mathfrak{A}_b \ar[d]^{\sigma_{b}^{U_b}}  \\
  \mathfrak{B}_a  \ar[r]_{\tau_{ba}}
                &  \mathfrak{B}_b             }
}
\end{equation}
It is commutative, that is, we have the equality for the compositions of morphisms
\begin{equation}\label{commutativitysigmabUbsigmaba}
\sigma_{b}^{U_b}\circ\sigma_{ba}=\tau_{ba}\circ \sigma_{a}^{U_a}.
\end{equation}
 To show the validity of (\ref{commutativitysigmabUbsigmaba}) let us take any element $A\in \mathfrak{A}_a$. For every index $j\in U_b$ we have the following equalities:
$$
[\sigma_{b}^{U_b}\circ\sigma_{ba}(A)](j)=\sigma_b^{K_j}(\sigma_{ba}(A))=\sigma_a^{K_j}(A);
$$
$$
[\tau_{ba}\circ \sigma_{a}^{U_a}(A)](j)=\sigma_{a}^{U_a}(A)(j)=\sigma_a^{K_j}(A).
$$
Therefore, we obtain equation (\ref{commutativitysigmabUbsigmaba}).

 The commutativity of (\ref{digrammathfrakAasigmaaUasigmaba}) yields the commutativity of the diagram
\[
\xymatrix{
  \mathfrak{A}_a \ar[rr]^{\sigma_{ba}} \ar[dr]_{\tau^{K_i}_a\circ\sigma_a^{U_a}}
                &  &    \mathfrak{A}_b \ar[dl]^{\tau^{K_i}_b\circ\sigma_b^{U_b}}    \\
                & \mathfrak{B}^{K_i}                 }
\]
Indeed, we have the following equalities:
$$
\tau^{K_i}_a\circ\sigma_a^{U_a}=(\tau^{K_i}_b\circ \tau_{ba})\circ\sigma_a^{U_a}=(\tau^{K_i}_b\circ\sigma_b^{U_b})\circ\sigma_{ba}.
$$

We claim that for every $\textbf{a}\in K_i$ the $*$-homomorphism \ $\tau^{K_i}_a\circ\sigma_a^{U_a}$ \ is injective. To see that we take two arbitrary distinct elements $A_1$ and $A_2$ in the $C^*$-algebra $\mathfrak{A}_a$. For $l=1,2$ we have the representations
$$
\tau^{K_i}_a\circ\sigma_a^{U_a}(A_l)=\theta^{K_i}_a(\sigma_a^{U_a}(A_l))+\Sigma.
$$
We need to show that the condition
\begin{equation}\label{doesnotbelongtoSigma}
\theta^{K_i}_a(\sigma_a^{U_a}(A_1-A_2))\notin \Sigma
\end{equation}
holds, that is, the net $\{\|[\theta^{K_i}_a(\sigma_a^{U_a}(A_1-A_2))](x)\| \mid x\in K_i\}$ does not converge to $0$.

By  (\ref{teta}) for every element $x\in K_i$ one has
\begin{equation}\label{thetaKiasigmaaUaAlx}
[\theta^{K_i}_{a}(\sigma_a^{U_a}(A_l))](x)=
\begin{cases}
\tau_{xa}(\sigma_a^{U_a}(A_l)), & \text{if  $ a\leq x$}; \\
0, & \text{otherwise}.
\end{cases}
\end{equation}
In view of the commutativity of diagram (\ref{digrammathfrakAasigmaaUasigmaba}) with $x$ instead of $b$ we have
in formula (\ref{thetaKiasigmaaUaAlx}) the following:
\begin{equation}\label{tauxasigmaaUaAl}
\tau_{xa}(\sigma_a^{U_a}(A_l))=\sigma_x^{U_x}(\sigma_{xa}(A_l)).
\end{equation}
It follows from (\ref{thetaKiasigmaaUaAlx}), (\ref{tauxasigmaaUaAl}), (\ref{normsigmaaUaAj}) and the injectivity of the $*$-homomorphism $\sigma_{xa}$) that for every element $x\in K_i$ we get
\begin{equation}\label{normthetaKiasigmaaUaA1minusA2}
\|[\theta^{K_i}_a(\sigma_a^{U_a}(A_1-A_2))](x)\|=
\begin{cases}
%\|\sigma_x^{U_x}(\sigma_{xa}(A_1-A_2))\|=
\|A_1-A_2\|, & \text{if  $ a\leq x$}; \\
0, & \text{otherwise}.
\end{cases}
\end{equation}
Really, on the right-hand part of (\ref{normthetaKiasigmaaUaA1minusA2}) the first row is valid because we have the equalities:
\begin{equation*}
\begin{split}
\|\sigma_x^{U_x}(\sigma_{xa}(A_1-A_2))\|& =\sup\left\{\left\|[\sigma_x^{U_x}(\sigma_{xa}(A_1-A_2))](j)\right\| \Big |\, j\in U_x\right\} =\\
&=\sup\left\{\left\|\sigma_x^{K_j}(\sigma_{xa}(A_1-A_2))\right\| \Big |\, j\in U_x\right\} =\\
&=\left\|\sigma_{xa}(A_1-A_2))\right\| =\|A_1-A_2\|.
\end{split}
\end{equation*}

Hence, there exists $\varepsilon > 0$, for example, $\varepsilon=\frac{1}{2}\|A_1-A_2\|$, satisfying the following property: for every $y\in K_i$ there is an element $x\in K_i$ such that $y\leq x$, $a\leq x$ and
$$
\|[\theta^{K_i}_a(\sigma_a^{U_a}(A_1-A_2))](x)\|\geqslant \varepsilon.
$$
This means that condition (\ref{doesnotbelongtoSigma}) is satisfied, and the $*$-homomorphism \ $\tau^{K_i}_a\circ\sigma_a^{U_a}$ \ is an injection, as claimed.

Thus, by the universal property of inductive limits, we get the injective $*$-homomorphism of $C^*$-algebras \
$
\mathfrak{A}^{K_i}\longrightarrow \mathfrak{B}^{K_i},
$ as required.
\qed
\end{proof}

\begin{corollary} The $C^*$-algebra $\mathfrak{M}_{\mathcal{F}}$ is isomorphic to a subalgebra of $\widehat{\mathfrak{M}}_{\mathcal{F}}$.
\end{corollary}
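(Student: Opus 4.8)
The plan is to assemble the corollary from Theorem~\ref{II} by forming the direct product of the index-wise embeddings. By Theorem~\ref{II}, for each $i \in I$ there is an injective $*$-homomorphism $\Phi_i : \mathfrak{A}^{K_i} \longrightarrow \mathfrak{B}^{K_i}$; this is precisely the map produced at the end of that proof via the universal property of the inductive limit. Recall that an injective $*$-homomorphism between $C^*$-algebras is automatically isometric, so $\|\Phi_i(a_i)\| = \|a_i\|$ for every $a_i \in \mathfrak{A}^{K_i}$. First I would use this family to define the candidate map
\[
\Phi : \mathfrak{M}_{\mathcal{F}} = \prod_{i \in I} \mathfrak{A}^{K_i} \longrightarrow \widehat{\mathfrak{M}}_{\mathcal{F}} = \prod_{i \in I} \mathfrak{B}^{K_i}, \qquad \Phi\big((a_i)\big) := \big(\Phi_i(a_i)\big).
\]

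Next I would check that $\Phi$ lands in the direct product, that is, that the image is bounded. This is immediate from the isometry property: $\sup_i \|\Phi_i(a_i)\| = \sup_i \|a_i\| = \|(a_i)\| < \infty$, so $\Phi\big((a_i)\big) \in \widehat{\mathfrak{M}}_{\mathcal{F}}$ and in fact $\|\Phi((a_i))\| = \|(a_i)\|$, showing that $\Phi$ itself is isometric. Since the algebraic operations and the involution on both direct products are defined coordinatewise and each $\Phi_i$ is a $*$-homomorphism, $\Phi$ is a $*$-homomorphism.

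Finally, injectivity follows coordinatewise: if $\Phi\big((a_i)\big) = 0$, then $\Phi_i(a_i) = 0$ for every $i$, whence $a_i = 0$ by the injectivity of $\Phi_i$, so $(a_i) = 0$. Therefore $\Phi$ is an injective $*$-homomorphism, hence a $*$-isomorphism onto its image $\Phi(\mathfrak{M}_{\mathcal{F}})$, which is a $C^*$-subalgebra of $\widehat{\mathfrak{M}}_{\mathcal{F}}$. I do not expect any genuine obstacle here: the only point requiring a standard fact rather than a direct computation is that the index-wise embeddings are isometric, which guarantees simultaneously that the product map is well-defined on the bounded direct product and that injectivity is preserved in the product.
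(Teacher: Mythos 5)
Your proposal is correct and is precisely the argument the paper intends: the corollary is stated without proof as an immediate consequence of Theorem~\ref{II}, obtained by taking the direct product of the coordinatewise embeddings $\mathfrak{A}^{K_i}\longrightarrow\mathfrak{B}^{K_i}$, exactly as you do. Your observation that injective $*$-homomorphisms of $C^*$-algebras are isometric is the right (and only) point needing care, since it guarantees both that the product map lands in the bounded direct product and that its image is a closed $*$-subalgebra.
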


\begin{theorem}~\label{theoremaboutisolated}
Let $i\in I$ be an isolated point in the topological space $(I,\tau)$.
Then the $C^*$-algebras
$\mathfrak{B}^{K_i}$ and $\mathfrak{A}^{K_i}$ are  isomorphic.
\end{theorem}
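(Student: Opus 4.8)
The plan is to exploit the fact that an isolated point is witnessed by a basic neighbourhood that is itself a singleton, and then to reduce the inductive system defining $\mathfrak{B}^{K_i}$ to a constant system with identity connecting maps over a cofinal subset of $K_i$. In outline: find $a_0$ with $U_{a_0}=\{i\}$, pass to the cofinal set $D$ of elements of $K_i$ above $a_0$, observe that $\mathfrak{B}_a=\mathfrak{A}^{K_i}$ with identity transitions for $a\in D$, and conclude that the limit is $\mathfrak{A}^{K_i}$.

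First I would observe that since $\{i\}$ is open in $\tau$ and every basic set $U_a$ is non-empty, the point $i$ being isolated forces the existence of an element $a_0\in K$ with $U_{a_0}=\{i\}$; the relation $i\in U_{a_0}$ then gives $a_0\in K_i$. Next, setting $D:=\{a\in K_i \mid a_0\leq a\}$, I would check that $D$ is cofinal in $K_i$: given $b\in K_i$, upward directedness of $K_i$ produces $a\in K_i$ dominating both $a_0$ and $b$, so $a\in D$ and $b\leq a$. For each $a\in D$, Lemma~\ref{fortopology} gives $U_a\subseteq U_{a_0}=\{i\}$, while $a\in K_i$ gives $i\in U_a$; hence $U_a=\{i\}$ and the canonical identification $\mathfrak{B}_a=\prod_{j\in U_a}\mathfrak{A}^{K_j}=\mathfrak{A}^{K_i}$, $f\mapsto f(i)$, applies. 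Under these identifications the connecting map $\tau_{ba}$ for $a,b\in D$ with $a\leq b$ sends $f$ to the function $j\mapsto f(j)$ on $U_b=\{i\}$, that is, it becomes the identity of $\mathfrak{A}^{K_i}$.

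With this constant subsystem in hand, I would then identify the inductive limit. Fixing any $a_1\in D$ and using cofinality together with the relation $\tau_a^{K_i}=\tau_b^{K_i}\circ\tau_{ba}$ (the analogue of (\ref{sigma}) for the system $(K_i,\{\mathfrak{B}_a\},\{\tau_{ba}\})$), every map $\tau_a^{K_i}$ with $a\in D$ coincides, after the identification above, with a single injective $*$-homomorphism $\Phi:=\tau_{a_1}^{K_i}:\mathfrak{A}^{K_i}\to\mathfrak{B}^{K_i}$; indeed for $a\in D$ one chooses $b\in D$ above both $a_1$ and $a$ and uses that the intervening connecting maps are identities. Consequently $\tau_a^{K_i}(\mathfrak{B}_a)=\Phi(\mathfrak{A}^{K_i})$ for all $a\in D$, and cofinality of $D$ in $K_i$ together with equality (\ref{cup}) yields $\mathfrak{B}^{K_i}=\overline{\bigcup_{a\in K_i}\tau_a^{K_i}(\mathfrak{B}_a)}=\overline{\Phi(\mathfrak{A}^{K_i})}$. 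Since $\Phi$ is an injective $*$-homomorphism of $C^*$-algebras it is isometric, so $\Phi(\mathfrak{A}^{K_i})$ is complete and hence closed, giving $\mathfrak{B}^{K_i}=\Phi(\mathfrak{A}^{K_i})\cong\mathfrak{A}^{K_i}$.

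The main obstacle I anticipate is bookkeeping rather than conceptual: one must verify carefully that, upon restricting to the cofinal set $D$, the union $\bigcup_{a\in K_i}\tau_a^{K_i}(\mathfrak{B}_a)$ collapses to the single image $\Phi(\mathfrak{A}^{K_i})$, and that this image is already closed. Equivalently, one could finish by invoking Theorem~\ref{II}, which already supplies an injective $*$-homomorphism $\mathfrak{A}^{K_i}\to\mathfrak{B}^{K_i}$, and then establishing its surjectivity by the same cofinality argument; I would prefer the direct construction above, since it makes the identity-connecting-map structure explicit and avoids re-deriving the embedding.
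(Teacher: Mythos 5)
Your proposal matches the paper's proof in all essentials: both arguments locate an element $a_0$ with $U_{a_0}=\{i\}$, pass to the cofinal up-set of $a_0$ in $K_i$, observe that there each $\mathfrak{B}_b$ is canonically $\mathfrak{A}^{K_i}$ with identity connecting maps, and conclude that the limit is $\mathfrak{A}^{K_i}$ (the paper finishes by invoking the universal property of the inductive limit over the cofinal subsystem, you by noting the image of $\Phi$ is dense and closed --- a cosmetic difference). The one point you assert without proof is the injectivity of $\Phi=\tau_{a_1}^{K_i}$, which is not automatic here because the connecting maps $\tau_{ba}$ of the full system are restriction maps and need not be injective; the paper checks it explicitly by showing $\theta_{b}^{K_i}(f)\notin\Sigma$ for $f\neq 0$, an argument that goes through verbatim in your setting since the transitions above $a_0$ are identities, so this is a one-line repair rather than a real gap.
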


\begin{proof}
Since the one-point set $\{i\}$ is open in the topological space $(I,\tau)$ and the family
of sets $\{U_a \mid a\in K\}$ constitutes a base for the topology $\tau$ there exists an element
$a\in K_i$ such that we have the equality \ $U_a=\{i\}$. Fix that element $a\in K_i$.

By Lemma~\ref{fortopology},  the  neighbourhood  $U_b$ coincides with the one-point set $\{i\}$ whenever $b\in K$ and
$a\leq b$. Note that  such an element $b$ lies in the set $K_i$.

Further, we take the upward directed set
$$
K_i^a:=\{b\in K_i \mid a\leq b \},
$$
which is a cofinal subset in $K_i$.

Then we consider the inductive system $(K_i^a, \{\mathfrak{B}_b\},\{\tau_{cb}\})$ over the set $K_i^a$.
 For every element $b\in K_i^a$ we have the equality
 $\mathfrak{B}_b=\mathfrak{B}_a$. Note that the $C^*$-algebra $\mathfrak{B}_a$ consists of all the functions from the one-point set $\{i\}$ into the $C^*$-algebra $\mathfrak{A}^{K_i}$. Obviously, the $C^*$-algebras $\mathfrak{B}_a$ and $\mathfrak{A}^{K_i}$ are isomorphic. Moreover, in this system each bonding morphism $\tau_{cb}$  is
 the identity mapping. Thus, one has the isomorphism of $C^*$-algebras
 \begin{equation}~\label{somorpismKiaBbtaucbAKi}
 \varinjlim\limits(K_i^a, \{\mathfrak{B}_b\},\{\tau_{cb}\})\simeq\mathfrak{A}^{K_i}.
 \end{equation}

Further, we claim that there exists an isomorphism between $C^*$-algebras
\begin{equation}~\label{somorpismKiaBbtaucbBKi}
 \varinjlim\limits(K_i^a, \{\mathfrak{B}_b\},\{\tau_{cb}\})\simeq\mathfrak{B}^{K_i}.
 \end{equation}
The existence of such an isomorphism follows from the universal property for the inductive limits.

To show this, firstly, for every $b\in K_i^a$ we consider the $*$-homomorphism $$\tau^{K_i}_b:\mathfrak{B}_b\longrightarrow \mathfrak{B}^{K_i}.$$
It is injective. Indeed, we take an arbitrary non-zero element $f\in \mathfrak{B}_b$.%, that is, $f(i)=A$, $A\in \mathfrak{A}^{K_i}$, $A\neq 0$.
 Then we have
$$
\tau^{K_i}_b(f)=\theta^{K_i}_b(f) + \Sigma, %\mathscr{K}
$$
where the $*$-homomorphism $\theta^{K_i}_b:\mathfrak{B}_b\longrightarrow \prod\limits_{x\in K_i}\mathfrak{B}_x$ is given by the formula
\[
[\theta^{K_i}_b(f)](x)=
\begin{cases}
\tau_{xb}(f)=f, & \text{if  $b\leq x$}; \\
0, &\text{otherwise}.
\end{cases}
\]
Obviously, we have $\theta^{K_i}_b(f) \notin \Sigma$. Hence, $\tau^{K_i}_b$ is an injective $*$-homomorphism.

It is clear that  $\tau^{K_i}_b=\tau^{K_i}_c \circ \tau_{cb}$ whenever $b,c\in K^a_i$ and $b\leq c$.

Secondly, we prove the following equality:
\begin{equation}\label{eqBKiKai}
\mathfrak{B}^{K_i}=\overline{\bigcup\limits_{b\in K^a_i}\tau_b^{K_i}(\mathfrak{B}_b).}
\end{equation}
To this end, we recall that one has the equality
\begin{equation}\label{eqBKiKi}
\mathfrak{B}^{K_i}=\overline{\bigcup\limits_{x\in K_i}\tau_x^{K_i}(\mathfrak{B}_x).}
\end{equation}

Certainly, the right-hand part of (\ref{eqBKiKai}) is contained in the right-hand part of (\ref{eqBKiKi}). To obtain the reverse inclusion we fix an arbitrary element $x\in K_i$.
Since $K^a_i$ is cofinal in $K_i$ there is $b\in K^a_i$ such that $x\leq b$. The commutativity
of the diagram
\[
\xymatrix{
  \mathfrak{B}_x \ar[rr]^{\tau_{bx}} \ar[dr]_{\tau^{K_i}_x}
                &  &    \mathfrak{B}_b \ar[dl]^{\tau^{K_i}_b}    \\
                & \mathfrak{B}^{K_i}                 }
\]
yields the following equality for  sets
$$
\tau^{K_i}_b(\tau_{bx}(\mathfrak{B}_x))=\tau^{K_i}_x(\mathfrak{B}_x)
$$
which implies the desired inclusion for sets, namely,
$$
\tau^{K_i}_x(\mathfrak{B}_x)\subset \tau^{K_i}_b(\mathfrak{B}_b).
$$

Consequently, the right-hand parts of (\ref{eqBKiKai}) and (\ref{eqBKiKi}) coincide. Thus, we have proved equality (\ref{eqBKiKai}).

It follows from Proposition~11.4.1(ii) in \cite{KadisonRingrose} that there exists an isomorphism (\ref{somorpismKiaBbtaucbBKi}).

Finally,combining isomorphisms (\ref{somorpismKiaBbtaucbAKi}) and (\ref{somorpismKiaBbtaucbBKi}), we conclude that the  $C^*$-algebras
$\mathfrak{B}^{K_i}$ and $\mathfrak{A}^{K_i}$ are isomorphic, as required.
\qed
\end{proof}

The following statement is an immediate consequence of Theorem~\ref{theoremaboutisolated}.

\begin{corollary}
Let $(I,\tau)$ be a discrete  topological space.  Then the $C^*$-algebras
$\mathfrak{M}_{\mathcal{F}}$ and $\widehat{\mathfrak{M}}_{\mathcal{F}}$ are isomorphic.
\end{corollary}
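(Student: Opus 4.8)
The plan is to reduce the statement to Theorem~\ref{theoremaboutisolated} applied pointwise and then assemble the resulting componentwise isomorphisms into a single isomorphism of the direct products. First I would observe that in a discrete space every one-point set is open, so every index $i\in I$ is an isolated point of $(I,\tau)$. Hence, by Theorem~\ref{theoremaboutisolated}, for each $i\in I$ there is a $*$-isomorphism $\varphi_i\colon \mathfrak{A}^{K_i}\longrightarrow\mathfrak{B}^{K_i}$.

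Next I would define the candidate map $\Phi\colon\mathfrak{M}_{\mathcal{F}}\longrightarrow\widehat{\mathfrak{M}}_{\mathcal{F}}$ componentwise by $\Phi((a_i)):=(\varphi_i(a_i))$. Since each $\varphi_i$ is a $*$-homomorphism and the algebraic operations in both direct products are pointwise, $\Phi$ is visibly a $*$-homomorphism wherever it is defined. The one point that needs care is that $\Phi$ actually lands in the bounded product $\widehat{\mathfrak{M}}_{\mathcal{F}}$, and conversely that its componentwise inverse lands in $\mathfrak{M}_{\mathcal{F}}$; this is where I would invoke the standard fact that a $*$-isomorphism of $C^*$-algebras is isometric. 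Thus $\|\varphi_i(a_i)\|=\|a_i\|$ for every $i$, so $\sup_i\|\varphi_i(a_i)\|=\sup_i\|a_i\|<\infty$, and $\Phi$ is well-defined and norm-preserving.

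Finally I would check bijectivity. Injectivity is immediate from the injectivity of each $\varphi_i$: if $(\varphi_i(a_i))=0$ then $\varphi_i(a_i)=0$ and hence $a_i=0$ for every $i$. For surjectivity, given $(b_i)\in\widehat{\mathfrak{M}}_{\mathcal{F}}$ I would set $a_i:=\varphi_i^{-1}(b_i)$; since each $\varphi_i^{-1}$ is again an isometric $*$-isomorphism, $\sup_i\|a_i\|=\sup_i\|b_i\|<\infty$, so that $(a_i)\in\mathfrak{M}_{\mathcal{F}}$ and $\Phi((a_i))=(b_i)$. Therefore $\Phi$ is a $*$-isomorphism and $\mathfrak{M}_{\mathcal{F}}\simeq\widehat{\mathfrak{M}}_{\mathcal{F}}$.

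I do not anticipate a genuine obstacle here: the only substantive ingredient beyond Theorem~\ref{theoremaboutisolated} is the isometry of $*$-isomorphisms of $C^*$-algebras, which guarantees that the componentwise map respects the supremum-norm structure of the direct products and hence restricts to a bijection between the bounded products. All other verifications are routine, being inherited pointwise from the individual isomorphisms $\varphi_i$.
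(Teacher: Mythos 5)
Your proposal is correct and follows exactly the route the paper intends: the paper states this corollary as an immediate consequence of Theorem~\ref{theoremaboutisolated} without further proof, and your argument supplies precisely the routine details (every point of a discrete space is isolated, apply the theorem componentwise, use that $*$-isomorphisms of $C^*$-algebras are isometric to see that the componentwise map preserves the bounded direct products). Nothing is missing.
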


\begin{theorem}\label{I} Let $i\in I$ be a non-isolated point with a countable neighbourhood base.
Then the algebra $\mathfrak{B}^{K_i}$ has a non-trivial center.
\end{theorem}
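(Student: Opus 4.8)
The plan is to exhibit a single central projection in $\mathfrak{B}^{K_i}$ that is neither $0$ nor the identity; this already forces $Z(\mathfrak{B}^{K_i})\supsetneq\mathbb{C}\,\mathbb{I}_{\mathfrak{B}^{K_i}}$. The candidate is the class of the ``indicator of the coordinate $i$''. For each $a\in K_i$ we have $i\in U_a$, so the $i$-th factor $\mathfrak{A}^{K_i}$ genuinely occurs in $\mathfrak{B}_a=\prod_{j\in U_a}\mathfrak{A}^{K_j}$, and I would set $e_a\in\mathfrak{B}_a$ to be the element with $e_a(i)=\mathbb{I}_{\mathfrak{A}^{K_i}}$ and $e_a(j)=0$ for $j\in U_a\setminus\{i\}$. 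Since multiplication in $\mathfrak{B}_a$ is coordinatewise, $e_a$ is a self-adjoint idempotent lying in the centre of $\mathfrak{B}_a$. By Lemma~\ref{fortopology}, whenever $a\leq b$ in $K_i$ the map $\tau_{ba}$ is restriction to $U_b$; since $i\in U_b$, one checks immediately that $\tau_{ba}(e_a)=e_b$. Thus $\{e_a\}_{a\in K_i}$ is a coherent family, and $e:=\tau_a^{K_i}(e_a)$ is a well-defined element of $\mathfrak{B}^{K_i}$, independent of the chosen $a$.

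Next I would verify that $e$ is a central projection. That $e$ is a projection is immediate from $e_a^2=e_a=e_a^*$ together with the fact that $\tau_a^{K_i}$ is a $*$-homomorphism. For centrality, recall that $\bigcup_{a\in K_i}\tau_a^{K_i}(\mathfrak{B}_a)$ is dense in $\mathfrak{B}^{K_i}$. Given $\xi=\tau_b^{K_i}(g)$ with $g\in\mathfrak{B}_b$, I would choose $c\in K_i$ with $a,b\leq c$, rewrite $e=\tau_c^{K_i}(e_c)$ and $\xi=\tau_c^{K_i}(\tau_{cb}(g))$, and use that $e_c$ is central in $\mathfrak{B}_c$ to obtain $e\xi=\xi e$. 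By norm-continuity of multiplication, $e$ then commutes with all of $\mathfrak{B}^{K_i}$. Equivalently, in the model $\mathfrak{B}^{K_i}\subseteq\prod_{x\in K_i}\mathfrak{B}_x/\Sigma$ the representative $\theta_a^{K_i}(e_a)$ is central in the full product, its value at each $x$ being a central element of $\mathfrak{B}_x$.

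It then remains to show $e\neq 0$ and $e\neq\mathbb{I}_{\mathfrak{B}^{K_i}}$, which I would read off from the quotient norm on $\prod_{x\in K_i}\mathfrak{B}_x/\Sigma$. By the definition of $\theta_a^{K_i}$ and the equality $\tau_{xa}(e_a)=e_x$ established above, one has $[\theta_a^{K_i}(e_a)](x)=e_x$ for all $x\geq a$ in $K_i$, with $\|e_x\|=\|\mathbb{I}_{\mathfrak{A}^{K_i}}\|=1$; hence the net $\{\|[\theta_a^{K_i}(e_a)](x)\|\}$ is eventually $1$, so $\theta_a^{K_i}(e_a)\notin\Sigma$ and $e\neq 0$. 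For $e\neq\mathbb{I}_{\mathfrak{B}^{K_i}}$ I would compute $[\theta_a^{K_i}(e_a-\mathbb{I}_{\mathfrak{B}_a})](x)=e_x-\mathbb{I}_{\mathfrak{B}_x}$ for $x\geq a$, whose norm equals $1$ as soon as $U_x\setminus\{i\}\neq\emptyset$. This is exactly where non-isolation enters: since $\{U_a\mid a\in K\}$ is a base, $i$ is isolated if and only if $U_a=\{i\}$ for some $a$; as $i$ is non-isolated, every $U_x$ with $x\in K_i$ satisfies $U_x\supsetneq\{i\}$, so the net $\{\|e_x-\mathbb{I}_{\mathfrak{B}_x}\|\}$ is eventually $1$ and $e-\mathbb{I}_{\mathfrak{B}^{K_i}}\notin\Sigma$. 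A projection different from $0$ and the unit cannot be a scalar multiple of the unit, whence $Z(\mathfrak{B}^{K_i})\neq\mathbb{C}\,\mathbb{I}_{\mathfrak{B}^{K_i}}$, as required.

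The main obstacle is the step $e\neq\mathbb{I}_{\mathfrak{B}^{K_i}}$: one must ensure that the coordinate $i$ is not eventually the \emph{only} surviving coordinate of $\mathfrak{B}_x$ as $x$ runs cofinally through $K_i$, for otherwise $e$ would collapse to the unit in the limit. The non-isolation hypothesis is precisely the guarantee that this never occurs. I note that the construction above uses only that $i$ is non-isolated; the countable neighbourhood base is convenient if one prefers to run the limiting estimate along a single decreasing sequence $U_{a_1}\supseteq U_{a_2}\supseteq\cdots$ of neighbourhoods of $i$, but it is not otherwise essential for producing one non-trivial central projection.
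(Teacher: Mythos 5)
Your proof is correct, but it takes a genuinely different (and leaner) route than the paper. The paper uses the countable neighbourhood base to build a nested chain $U_{a_1}\supset U_{a_2}\supset\cdots$ with $a_1\leq a_2\leq\cdots$, forms the shells $W_n=U_{a_n}\setminus U_{a_{n+1}}$, and defines an alternating scalar-valued function $f$ ($\mathbb{I}$ on odd shells, $0$ on even ones); the central projections are then $\tilde f=\tau^{K_i}_{a_n}(f|_{U_{a_n}})$ and $\tilde g=\mathbb{I}-\tilde f$. You instead take the single coordinate projection $e_a$ supported at the index $i$ itself, check coherence under the restriction maps $\tau_{ba}$, and pass to the limit. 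The centrality verification is essentially the same in both arguments (the representative is coordinatewise a scalar multiple of the unit, hence central in the full product $\prod_{x\in K_i}\mathfrak{B}_x$), but the non-triviality step differs: you isolate exactly where non-isolation is used, namely $U_x\supsetneq\{i\}$ for all $x\in K_i$ forces $\|e_x-\mathbb{I}_{\mathfrak{B}_x}\|=1$ cofinally, so $e\neq\mathbb{I}_{\mathfrak{B}^{K_i}}$ — a point the paper's proof passes over rather quickly when asserting $\tilde f$ and $\tilde g$ are non-trivial. What your approach buys is economy and a sharpening of the statement: as you observe, the countable neighbourhood base is not needed, non-isolation alone suffices. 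What the paper's construction buys is flexibility: varying the alternation pattern over the shells $W_n$ yields many distinct central projections, which could be used to show the center is large, though the paper does not exploit this.
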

%Then  the following properties are fulfilled:
%\begin{itemize}
%\item[1)] The algebra $\mathfrak{B}^{K_i}$ has a non-trivial center;

%\item[2)] The algebra $\mathfrak{A}^{K_i}$ is isomorphic to a proper subalgebra of $\mathfrak{B}^{K_i}$.
%\end{itemize}
%\end{theorem}
\begin{proof}
 It is clear that in the space $(I,\tau)$ one can construct a countable neighbourhood base
$\{U_{a_n} \mid a_n\in K_i, n\in \mathbb{N}\}$ at the point $i\in I$
satisfying the following conditions:
%\textbf{By}, we have the equality
%$$\bigcap\limits_{n=1}^{\infty}U_{a_n}=\{i\}.$$
%Without loss of generality, we may assume that the following condition holds:
$$U_{a_1}\supset
U_{a_2}\supset U_{a_3}\supset \ldots ;$$
$$
a_1\leq a_2\leq
a_3\leq\ldots .
$$

Further, for each $n\in \mathbb{N}$ we consider the subset $W_n$ in $I$ given by %the formula
$$W_n:=U_{a_n}\setminus
U_{a_{n+1}}.$$

Using these sets, we define the element $f$ in the subalgebra  $\prod\limits_{i\in
I}\mathbb{C} \mathbb{I}_{\mathfrak{A}^{K_i}}$ of the algebra $\mathfrak{M}_{\mathcal{F}}$. Namely,
for every $i\in I$ the value of the function $f$ at the point $i$ is defined as follows:
\begin{equation}\label{definitionfi}
f(i)=
\begin{cases}
0, & \text{if $i\notin U_{a_1}$;} \\
\mathbb{I}_{\mathfrak{A}^{K_i}}, & \text{if $i\in W_{2k-1}$, $k\in \mathbb{N}$;} \\
0, & \text{if $i\in W_{2k}$, $k\in \mathbb{N}$.}
\end{cases}
\end{equation}

Now for every element $a_n$ we consider two elements $f_{a_n}$ and $g_{a_n}$ in the subalgebra  $\prod\limits_{i\in
U_{a_n}}\mathbb{C} \mathbb{I}_{\mathfrak{A}^{K_i}}$ of the algebra $\mathfrak{B}_{a_n}$.
We put $f_{a_n}:=f|_{U_{a_n}}$, that is, the function $f_{a_n}$ is the restriction of the function $f$ to the neighbourhood $U_{a_n}$, and \ $g_{a_n}:=\mathbb{I}_{\mathfrak{B}_{a_n}}-f_{a_n}$.

Together with the functions $f_{a_n}$ and $g_{a_n}$ we define two elements $\tilde{f}$ and $\tilde{g}$ in the inductive limit $\mathfrak{B}^{K_i}$ by

\begin{equation}\label{definitiontildef}
 \tilde{f}:=\tau_{a_n}^{K_i}(f_{a_n}) \quad \text{and} \quad
\tilde{g}:=\tau_{a_n}^{K_i}(g_{a_n}).
\end{equation}
It is clear that these elements do not depend on the choice of the index $a_n$.

Obviously, we have the equality
$\tilde{f}\cdot\tilde{g}=0$ as well as
$\tilde{f}+\tilde{g}=\mathbb{I}_{\mathfrak{B}^{K_i}}$. Thus the elements $\tilde{f}$ and $\tilde{g}$ are non-trivial projections in the $C^*$-algebra $\mathfrak{B}^{K_i}.$

We claim that the elements $\tilde{f}$ and $\tilde{g}$ belong to the center of the algebra $\mathfrak{B}^{K_i}.$ To this end, we take an element $A=h+\Sigma$ of the $C^*$-algebra  $\mathfrak{B}^{K_i},$ where $h\in \prod\limits_{x\in K_i}\mathfrak{B}_x$. Let us show that the following equality holds:
\begin{equation}\label{fAAf}
\tilde{f}\cdot A = A \cdot\tilde{f}.
\end{equation}

Indeed, for the left-hand part of (\ref{fAAf}) we have the expression:
\begin{equation}\label{leftpart}
\tilde{f}\cdot A=\tau^{K_i}_{a_n}(f_{a_n})\cdot A =(\theta^{K_i}_{a_n}(f_{a_n})+\Sigma)\cdot(h+\Sigma)=\theta^{K_i}_{a_n}(f_{a_n})\cdot h+\Sigma.
\end{equation}

Analogously, for the right-hand part of (\ref{fAAf}) we get the representation
\begin{equation}\label{rightpart}
A\cdot \tilde{f}=h\cdot \theta^{K_i}_{a_n}(f_{a_n})+\Sigma.
\end{equation}

Further, for $x\in K_i$ we have
\begin{equation}\label{thetaKianfanhx}
[\theta^{K_i}_{a_n}(f_{a_n})\cdot h](x)=
\begin{cases}
\tau_{xa_n}(f_{a_n})\cdot h(x), & \text{if  $ a_n\leq x$}; \\
0, & \text{otherwise}.
\end{cases}
\end{equation}

For the function $\tau_{xa_n}(f_{a_n})\cdot h(x)$ from the algebra $\mathfrak{B}_x$ we take its value at a point $j\in U_x$:
\begin{equation}\label{tauxanfanhxj}
[\tau_{xa_n}(f_{a_n})\cdot h(x)](j)=[\tau_{xa_n}(f_{a_n})(j)]\cdot[h(x)(j)]=f_{a_n}(j)\cdot [h(x)(j)].
\end{equation}

Changing  the order of the factors in (\ref{thetaKianfanhx}) and  (\ref{tauxanfanhxj}), one gets the similar expressions for the summand $h\cdot \theta^{K_i}_{a_n}(f_{a_n})$ in the right-hand part of (\ref{rightpart}).

 By (\ref{definitionfi}) and the definition of the function $f_{a_n}$,  we obtain the equality
$$
f_{a_n}(j)\cdot [h(x)(j)]=[h(x)(j)]\cdot f_{a_n}(j).
$$

Therefore the elements in the right-hand parts of (\ref{leftpart}) and (\ref{rightpart}) are the same. Hence, equality (\ref{fAAf}) is proved. Similarly one can prove equality (\ref{fAAf}) with $\tilde{g}$ instead of $\tilde{f}$.

 It follows that the elements $\tilde{f}$ and $\tilde{g}$ belong to the center of the algebra $\mathfrak{B}^{K_i}$, as claimed. \qed
\end{proof}

As a consequence of Theorem~\ref{I} and the definition of the $C^*$-algebra $\widehat{\mathfrak{M}}_{\mathcal{F}}$ we have the following statement.
\begin{corollary} Let $(I,\tau)$ be a first-countable topological space without isolated points.
Then the $C^*$-algebra $\widehat{\mathfrak{M}}_{\mathcal{F}}$ has a non-trivial center.
%\textbf{Then, for every $i\in I$, the algebra $\mathfrak{B}_i^{\infty}$ has a non-trivial center.}
\end{corollary}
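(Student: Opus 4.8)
The plan is to derive the statement from Theorem~\ref{I} applied at a single index, together with the elementary fact that a central element of one factor of a direct product of unital $C^*$-algebras lifts to a central element of the whole product. First I would translate the two topological hypotheses into the hypotheses of Theorem~\ref{I}: first-countability means that every point $i\in I$ has a countable neighbourhood base, while the absence of isolated points means that every such $i$ is non-isolated. Hence every index $i\in I$ satisfies the assumptions of Theorem~\ref{I}, and so each factor $\mathfrak{B}^{K_i}$ of the direct product $\widehat{\mathfrak{M}}_{\mathcal{F}}=\prod_{i\in I}\mathfrak{B}^{K_i}$ has a non-trivial center. (The index set $I$ is non-empty; indeed, being a non-empty $T_1$-space without isolated points, it is necessarily infinite.)

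Next I would construct an explicit central element of $\widehat{\mathfrak{M}}_{\mathcal{F}}$ that is not a scalar multiple of the unit. Fix any $i_0\in I$. By Theorem~\ref{I} the algebra $\mathfrak{B}^{K_{i_0}}$ contains a non-trivial central projection $p$, with $p\neq 0$ and $p\neq\mathbb{I}_{\mathfrak{B}^{K_{i_0}}}$; concretely one may take the projection $\tilde{f}$ produced in the proof of Theorem~\ref{I}. I would then define $w=(w_i)_{i\in I}\in\widehat{\mathfrak{M}}_{\mathcal{F}}$ by $w_{i_0}=p$ and $w_i=0$ for $i\neq i_0$; this is a well-defined element of the product, since $\|w\|=\|p\|<\infty$.

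The verification is routine. Each coordinate $w_i$ is central in the corresponding factor $\mathfrak{B}^{K_i}$, so for every $(a_i)\in\widehat{\mathfrak{M}}_{\mathcal{F}}$ the products $w\cdot(a_i)$ and $(a_i)\cdot w$ coincide coordinatewise, whence $w$ lies in the center of $\widehat{\mathfrak{M}}_{\mathcal{F}}$. Moreover $w$ is not of the form $\lambda\,\mathbb{I}_{\widehat{\mathfrak{M}}_{\mathcal{F}}}$, since the vanishing coordinates $w_i=0$ for $i\neq i_0$ would force $\lambda=0$, and then the $i_0$-coordinate would give $p=0$, a contradiction. Therefore the center of $\widehat{\mathfrak{M}}_{\mathcal{F}}$ properly contains $\mathbb{C}\,\mathbb{I}_{\widehat{\mathfrak{M}}_{\mathcal{F}}}$, that is, it is non-trivial. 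I do not expect any genuine obstacle here: the whole content is carried by Theorem~\ref{I}, and the only supporting ingredient is the standard description of the center of a direct product as the product of the centers, which is exactly what permits the single-factor projection to be lifted to the product. An even shorter variant would simply invoke this product-of-centers description together with the non-triviality of (every, hence at least one) factor's center.
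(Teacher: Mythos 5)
Your proposal is correct and takes essentially the same route as the paper, which presents the corollary as an immediate consequence of Theorem~\ref{I} together with the definition of $\widehat{\mathfrak{M}}_{\mathcal{F}}$ as the direct product $\prod_{i\in I}\mathfrak{B}^{K_i}$; your lifting of a non-trivial central projection from one factor to the product is exactly the step the authors leave implicit. The only (harmless) overkill is invoking Theorem~\ref{I} at every index when a single $i_0$ suffices, as you yourself note.
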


% For one-column wide figures use
%%%%%%%%%%%%%%%%%%%%%%%%%%%%%%%%%%%%%%%%%%%%%%%%%%%%%%%%%%%%%%%%%%%%\begin{figure}
% Use the relevant command to insert your figure file.
% For example, with the graphicx package use
  %%%%%%%%%%%%%%%%%%%%%%%%%%%%%%%%%%%%%%%%%%%%%%%%%%%%%%%%%%%%%%%%%%\includegraphics{example.eps}
\section*{Appendix: Figures for Examples}

%  \appendix
%\section{Figures for Examples}

 \emph{Example}~\ref{tau:nonHausdorff}.

\definecolor{uuuuuu}{rgb}{0.26666666666666666,0.26666666666666666,0.26666666666666666}
 \begin{center}
\begin{tikzpicture}[scale=0.5][line cap=round,line join=round,>=triangle 45,x=1cm,y=1cm]
\clip(-6.3,-6) rectangle (17,6);
\draw [->,line width=1pt] (-3,-4) -- (0,-2);
\draw [->,line width=1pt] (-3,-2) -- (0,0);
\draw [->,line width=1pt] (-3,0) -- (0,2);
\draw [->,line width=1pt] (-3,2) -- (0,4);
\draw [->,line width=1pt] (3,-4) -- (0,-2);
\draw [->,line width=1pt] (3,-2) -- (0,0);
\draw [->,line width=1pt] (3,0) -- (0,2);
\draw [->,line width=1pt] (3,2) -- (0,4);
\draw [->,line width=1pt] (-3,2) -- (-3,4);
\draw [->,line width=1pt] (3,2) -- (3,4);
\draw [->,line width=1pt] (-3,0) -- (-3,2);
\draw [->,line width=1pt] (3,0) -- (3,2);
\draw [->,line width=1pt] (-3,-2) -- (-3,0);
\draw [->,line width=1pt] (3,-2) -- (3,0);
\draw [->,line width=1pt] (-3,-4) -- (-3,-2);
\draw [->,line width=1pt] (3,-4) -- (3,-2);
\draw (-0.8,-2.5) node[anchor=north west] {$K_y$};
\draw (-0.8,-0.4) node[anchor=north west] {$K_{y+1}$};
\draw (-0.8,1.6) node[anchor=north west] {$K_{y+2}$};
\draw (-0.8,3.6) node[anchor=north west] {$K_{y+3}$};
\draw (-6,-1.6) node[anchor=north west] {$(-1, y)$};
\draw (3.5,-1.6) node[anchor=north west] {$(1, y)$};
\draw (-6,5) node[anchor=north west] {$K_{-\infty}$};
\draw (3.5,5) node[anchor=north west] {$K_{+\infty}$};
\draw (5.8,2) node[anchor=north west] {$U_{(1, y)}=\{+\infty\}\cup \{y+1, y+2, \dots\}$};
\draw (5.8,0.8) node[anchor=north west] {$U_{(-1, y)}=\{-\infty\}\cup\{y+1, y+2, \dots\}$};
\draw [line width=1pt] (-2.8,6)-- (-3.4,6);
\draw [line width=1pt] (-3.4,6)-- (-3.4,-6);
\draw [line width=1pt] (-3.4,-6)-- (-2.8,-6);
\draw [line width=1pt] (2.8,6)-- (3.4,6);
\draw [line width=1pt] (3.4,6)-- (3.4,-6);
\draw [line width=1pt] (3.4,-6)-- (2.8,-6);
\begin{scriptsize}
\draw [fill=black] (-3,-2) circle (1.5pt);
\draw [fill=black] (0,0) circle (2pt);
\draw [fill=black] (3,-2) circle (2pt);
\draw [fill=black] (-3,0) circle (2pt);
\draw [fill=black] (-3,2) circle (2pt);
\draw [fill=black] (-3,4) circle (2pt);
\draw [fill=black] (0,2) circle (2pt);
\draw [fill=black] (0,4) circle (2pt);
\draw [fill=black] (3,2) circle (2pt);
\draw [fill=black] (3,0) circle (2pt);
\draw [fill=black] (3,4) circle (2pt);
\draw [fill=black] (-3,-4) circle (2pt);
\draw [fill=black] (3,-4) circle (2pt);
\draw [fill=black] (0,-2) circle (2pt);
\draw [fill=black] (-3,-4.5) circle (1pt);
\draw [fill=black] (-3,-5) circle (1pt);
\draw [fill=black] (-3,-5.5) circle (1pt);
\draw [fill=black] (3,-4.5) circle (1pt);
\draw [fill=black] (3,-5) circle (1pt);
\draw [fill=black] (3,-5.5) circle (1pt);
\draw [fill=black] (-3,4.5) circle (1pt);
\draw [fill=black] (-3,5) circle (1pt);
\draw [fill=black] (-3,5.5) circle (1pt);
\draw [fill=black] (3,4.5) circle (1pt);
\draw [fill=black] (3,5) circle (1pt);
\draw [fill=black] (3,5.5) circle (1pt);
\draw [fill=black] (0,4.5) circle (1pt);
\draw [fill=black] (0,5) circle (1pt);
\draw [fill=black] (0,5.5) circle (1pt);
\draw [fill=black] (0,-4) circle (1.5pt);
\draw [fill=black] (0,-4.5) circle (1pt);
\draw [fill=black] (0,-5) circle (1pt);
\draw [fill=black] (0,-5.5) circle (1pt);
\end{scriptsize}
\end{tikzpicture}
\end{center}
\begin{center}
\textbf{Fig.1.} Non Hausdorff space
\end{center}

\emph{Example}~\ref{tau:locallycompact}.

\begin{center}
\begin{tikzpicture}[scale=0.7][line cap=round,line join=round,>=triangle 45,x=1cm,y=1cm]
\clip(-6.4,-6) rectangle (12,2.5);
\fill[line width=1.2pt,fill=black,pattern=dots,pattern color=black] (-4.925608929818391,-5.978900513088304) -- (-1.4279466666666663,0) -- (2.758824686291334,-5.978900513088304) -- cycle;
\draw [->,line width=1pt] (0,-6) -- (0,2.2);
\draw [->,line width=1pt] (-5,0) -- (9.5,0);
\draw [line width=0.8pt,dash pattern=on 4pt off 2.8pt] (0,2)-- (5.609407376598217,-6.010489662486432);
\draw [line width=0.8pt,dash pattern=on 4pt off 2.8pt] (6,2)-- (1,-6);
\draw [line width=1pt] (-5,0)-- (-7,0);
\draw (-5.6,-1.5) node[anchor=north west] {$(x_0; y_0)$};
\draw (3.3,-2.05) node[anchor=north west] {$(x_1; y_1)$};
\draw (-1.6,0.8) node[anchor=north west] {$i$};
\draw (-1.721550374273259,-2.860478053761196) node[anchor=north west] {$\mathbf{K_i}$};
\draw (4.9,-4.052719878728847) node[anchor=north west] {$y-y_1=-a (x-x_1)$};
\draw (5.1,0.8) node[anchor=north west] {$y-y_1=a (x-x_1)$};
\draw (-4.6,0.8) node[anchor=north west] {$U_{(x_0; y_0)}$};
\draw (2.4,-0.3) node[anchor=north west] {$(x_2; y_2)$};
\draw [line width=0.8pt,dash pattern=on 4pt off 2.8pt] (-4.925608929818391,-5.978900513088304)-- (-1.4279466666666663,0);
\draw [line width=0.8pt,dash pattern=on 4pt off 2.8pt] (-1.4279466666666663,0)-- (2.758824686291334,-5.978900513088304);
\draw [line width=0.8pt,dash pattern=on 4pt off 2.8pt] (-6.18393782068019,-5.978900513088304)-- (-2.4471249999999998,0);
\draw [line width=0.8pt,dash pattern=on 4pt off 2.8pt] (-4.957205428692007,0)-- (-0.7704340757340076,-5.978900513088304);
\draw [line width=2pt] (-4.9,0)-- (-2.5,0);
\draw (8.8,0) node[anchor=north west] {$x$};
\draw (-0.5293085493056073,2.15) node[anchor=north west] {$y$};
\draw (0,0) node[anchor=north west] {$O$};
\begin{scriptsize}
\draw [fill=black] (-1.4279466666666663,0) circle (2pt);
\draw [fill=black] (2.45,-0.8) circle (2pt);
\draw [fill=uuuuuu] (3.170361741259022,-2.5274212139855647) circle (2pt);
\draw [fill=uuuuuu] (-1.4279466666666663,0) circle (2pt);
\draw [color=uuuuuu] (-2.4471249999999998,0) circle (2.2pt);
\draw [color=uuuuuu] (-4.957205428692007,0) circle (2.2pt);
\draw [fill=uuuuuu] (-3.6308949355906432,-1.8940318969450298) circle (2pt);
\end{scriptsize}
\end{tikzpicture}
\end{center}
\begin{center}
\textbf{Fig.2.} Locally compact space
\end{center}

\newpage

\emph{Example}~\ref{tau:discrete}.
\definecolor{uuuuuu}{rgb}{0.26666666666666666,0.26666666666666666,0.26666666666666666}
\begin{center}
\begin{tikzpicture}[scale=0.48][line cap=round,line join=round,>=triangle 45,x=1cm,y=1cm]
\clip(-12.2,-10.5) rectangle (20,4);
\fill[line width=1.2pt,fill=black,pattern=dots,pattern color=black] (-7.994634268609583,-9.994634268609582) -- (2,0) -- (2,-9.99910531138003) -- cycle;
\draw [->,line width=1pt] (-2,0) -- (12,0);
\draw (-8.9,-2.7) node[anchor=north west] {$(n_0; m_0)$};
\draw (4,-6) node[anchor=north west] {$(n_1; m_1)$};
\draw (-1.8,-6.4) node[anchor=north west] {$\mathbf{K_i}$};
\draw (-5.3,1.7) node[anchor=north west] {$U_{(n_0; m_0)}$};
\draw (5.5,-0.6) node[anchor=north west] {$(n_2; m_2)$};
\draw (10.8,0) node[anchor=north west] {$n$};
\draw (0.18354498231868652,1.6) node[anchor=north west] {$m$};
\draw (0,0) node[anchor=north west] {$O$};
\draw [line width=0.8pt] (4,-10)-- (4,0);
\draw [line width=0.8pt] (2,-10)-- (2,0);
\draw [line width=0.8pt] (-12,-10)-- (-2,0);
\draw [line width=0.8pt] (-8,-10)-- (2,0);
\draw [line width=0.8pt] (-6,0)-- (-6,-10);
\draw [line width=1pt] (-6,0)-- (-2,0);
\draw [line width=0.8pt] (-6,0.6)-- (-2,0.6);
\draw [line width=0.8pt] (-6,0.3)-- (-6,0.6);
\draw [line width=0.8pt] (-2,0.3)-- (-2,0.6);
\draw (1.7,1) node[anchor=north west] {$i$};
\draw [line width=1pt] (10,0)-- (0,-10);
\draw [->,line width=1.2pt] (0,-10) -- (0,1.6);
\draw [line width=1pt] (-12,0)-- (-6,0);
\begin{scriptsize}
\draw [fill=black] (8,0) circle (2pt);
\draw [fill=black] (-2,-10) circle (2pt);
\draw [fill=black] (4,-10) circle (2pt);
\draw [fill=uuuuuu] (2,0) circle (3pt);
\draw [fill=black] (-2,0) circle (3pt);
\draw [fill=black] (2,-10) circle (2pt);
\draw [fill=uuuuuu] (4,-4) circle (2pt);
\draw [fill=uuuuuu] (-12,-10) circle (2pt);
\draw [fill=uuuuuu] (-8,-10) circle (2pt);
\draw [fill=uuuuuu] (-6,-10) circle (2pt);
\draw [fill=black] (-6,0) circle (3pt);
\draw [fill=uuuuuu] (-6,-4) circle (3pt);
\draw [fill=black] (-12,-2) circle (2pt);
\draw [fill=black] (-10,-2) circle (2pt);
\draw [fill=black] (-8,-2) circle (2pt);
\draw [fill=black] (-6,-2) circle (2pt);
\draw [fill=black] (-4,-2) circle (2pt);
\draw [fill=black] (-2,-2) circle (2pt);
\draw [fill=black] (-12,0) circle (2pt);
\draw [fill=black] (-10,0) circle (2pt);
\draw [fill=black] (-8,0) circle (2pt);
\draw [fill=black] (-4,0) circle (3pt);
\draw [fill=black] (-0,0) circle (2pt);
\draw [fill=black] (-12,-4) circle (2pt);
\draw [fill=black] (-10,-4) circle (2pt);
\draw [fill=black] (-12,-6) circle (2pt);
\draw [fill=black] (-10,-6) circle (2pt);
\draw [fill=black] (-12,-8) circle (2pt);
\draw [fill=black] (-10,-8) circle (2pt);
\draw [fill=black] (-8,-4) circle (2pt);
\draw [fill=black] (-8,-6) circle (2pt);
\draw [fill=black] (-6,-6) circle (2pt);
\draw [fill=black] (-8,-8) circle (2pt);
\draw [fill=black] (-6,-8) circle (2pt);
\draw [fill=black] (-4,-6) circle (2pt);
\draw [fill=black] (-4,-4) circle (2pt);
\draw [fill=black] (-2,-4) circle (2pt);
\draw [fill=black] (-2,-6) circle (2pt);
\draw [fill=black] (-2,-8) circle (2pt);
\draw [fill=black] (-4,-8) circle (2pt);
\draw [fill=black] (-4,-10) circle (2pt);
\draw [fill=black] (0,-10) circle (2pt);
\draw [fill=black] (0,-2) circle (2pt);
\draw [fill=black] (0,-4) circle (2pt);
\draw [fill=black] (0,-6) circle (2pt);
\draw [fill=black] (0,-8) circle (2pt);
\draw [fill=black] (2,-2) circle (2pt);
\draw [fill=black] (2,-4) circle (2pt);
\draw [fill=black] (2,-6) circle (2pt);
\draw [fill=black] (2,-8) circle (2pt);
\draw [fill=black] (4,-8) circle (2pt);
\draw [fill=black] (4,-6) circle (3pt);
\draw [fill=black] (4,-2) circle (2pt);
\draw [fill=black] (6,0) circle (2pt);
\draw [fill=black] (6,-2) circle (3pt);
\draw [fill=black] (6,-4) circle (2pt);
\draw [fill=black] (6,-6) circle (2pt);
\draw [fill=black] (6,-8) circle (2pt);
\draw [fill=black] (8,-8) circle (2pt);
\draw [fill=black] (8,-6) circle (2pt);
\draw [fill=black] (8,-4) circle (2pt);
\draw [fill=black] (8,-2) circle (2pt);
\draw [fill=black] (10,-2) circle (2pt);
\draw [fill=black] (10,-4) circle (2pt);
\draw [fill=black] (10,-6) circle (2pt);
\draw [fill=black] (10,-8) circle (2pt);
\draw [fill=black] (10,-10) circle (2pt);
\draw [fill=uuuuuu] (10,0) circle (2pt);
\draw [fill=black] (-10,-10) circle (2pt);
\draw [fill=black] (6,-10) circle (2pt);
\draw [fill=black] (8,-10) circle (2pt);
\draw [fill=black] (4,0) circle (2pt);
\end{scriptsize}
\end{tikzpicture}
\end{center}

% figure caption is below the figure
\begin{center}
\textbf{Fig.3.} Discrete space
\end{center}
%%%%%%%%%%%%%%%%%%%%%%%%%%%%%%%%%%%%%%%%%%%%%%%%%%%%%%%%%%%%%%%%%%%%%%\label{fig:1}       % Give a unique label
%%%%%%%%%%%%%%%%%%%%%%%%%%%%%%%%%%%%%%%%%%%%%%%%%%%%%%%%%%%%%%%%%%%%%%%\end{figure}
%
% For two-column wide figures use
%%%%%%%%%%%%%%%%%%%%%%%%%%%%%%%%%%%%%%%%%%%%%%%%%%%%%%%%%%%%%%%%%%%%%%%%%%%%%%%%\begin{figure*}
% Use the relevant command to insert your figure file.
% For example, with the graphicx package use
 %%%%%%%%%%%%%%%%%%%%%%%%%%%%%%%%%%%%%%%%%%%%%%%%%%%%%%%%%%%%%%%%%%%%%%%%%%%%%%% \includegraphics[width=0.75\textwidth]{example.eps}
% figure caption is below the figure
%%%%%%%%%%%%%%%%%%%%%%%%%%%%%%%%%%%%%%%%%%%%%%%%%%%%%%%%%%%%%%%%%%%%%%%%%%%%%%%\caption{Please write your figure caption here}
%%%%%%%%%%%%%%%%%%%%%%%%%%%%%%%%%%%%%%%%%%%%%%%%%%%%%%%%%%%%%%%%%%%%%%%%%%%%%%%%%%\label{fig:2}       % Give a unique label
%%%%%%%%%%%%%%%%%%%%%%%%%%%%%%%%%%%%%%%%%%%%%%%%%%%%%%%%%%%%%%%%%%%%%%%%%%%%%%%%%%%\end{figure*}
%

%\begin{acknowledgements}
%If you'd like to thank anyone, place your comments here
%and remove the percent signs.
%\end{acknowledgements}

% BibTeX users please use one of
%\bibliographystyle{spbasic}      % basic style, author-year citations
%\bibliographystyle{spmpsci}      % mathematics and physical sciences
%\bibliographystyle{spphys}       % APS-like style for physics
%\bibliography{}   % name your BibTeX data base

% Non-BibTeX users please use

\end{document}